\newcommand{\ccomma}{\mathpunct{\raisebox{0.5ex}{,}}}
\newtheorem{theorem}{Theorem}
\newtheorem{lemma}{Lemma}[section]
\newtheorem{definition}{Definition}
\newtheorem{remark}{Remark}
\newtheorem{corollary}{Corollary}
\begin{document}
\title[On summation of the Taylor series by the theta summation method]
{On summation of the Taylor series of the\\[0.5ex]
 function {\large 1}{\large /}{\large (1}-{\Large\textit{z}}{\large )}
by the theta summation method.}
\author{Victor Katsnelson}
\address{Department of Mathematics, the Weizmann Institute, Rehovot 76100, Israel}
\email{victor.katsnelson@weizmann.ac.il, victorkatsnelson@gmail.com}
\thanks{}

\subjclass{Primary 30B40}
\date{February, 19, 2014}

\dedicatory{}

\keywords{Taylor series, analytic continuation}

\begin{abstract}
\label{LPS}
The family of the Taylor series
\begin{math}
f_{\varepsilon}(z)=
\sum\limits_{0\leq{}n<\infty}e^{-\varepsilon{}n^2}z^n
\end{math}
is considered, where the parameter \(\varepsilon\), which enumerates the family, runs over
\(]0,\infty[\). For each fixed \(\varepsilon>0\), this Taylor series converges locally uniformly
with respect to \(z\in\mathbb{C}\) and represents an entire function in \(z\) of zero order.
The limiting behavior of the family \(\{f_{\varepsilon}(z)\}_{0<\varepsilon<\infty}\) is studied as \(\varepsilon\to+0\). Let \(\mathscr{G}\) be the interior of the closed curve
\(\mathscr{C}=\{\zeta\in\mathbb{C}:\,\zeta=e^{|t|+it},\,t\in[-\pi,\pi]\,\,\,\}\). It was shown that
\(\lim\limits_{\varepsilon\to+0}f_{\varepsilon}(z)=1/(1-z)\) for \(z\in\mathscr{G}\) locally uniformly
with respect to \(z\). Moreover,
\(\varlimsup\limits_{\varepsilon\to+0}|f_{\varepsilon}(z)|=\infty\) for \(z\not\in\mathscr{G}\).
\end{abstract}

\maketitle

\noindent
\textbf{Notation}:\\
\(\mathbb{C}\) stands for the complex plane;\\
\(\mathbb{R}\) stands for the real axis;\\
\(\mathbb{Z}\) stands for the set of all integers;\\
\(\mathbb{T}\) stands for the unit circle: \(\mathbb{T}=\{z\in\mathbb{C}:\,|z|=1\}\);\\
\(\mathbb{D}\) stands for the open unit disc: \(\mathbb{D}=\{z\in\mathbb{C}:\,|z|<1\}\); \\
\(\overline{\mathbb{D}}\) stands for the closed unit disc: \(\overline{\mathbb{D}}=\mathbb{D}\cup\mathbb{T}=\{z\in\mathbb{C};\,|z|\leq1\);\\
\(\mathbb{D}^{-}\) stands for the exterior of the unit circle \(\mathbb{T}\): \(\mathbb{D^{-}}\!\!=\!\{z\in\mathbb{C}:\,|z|>1\}\cup\{\infty\}\);\\
\(\{z\}\) stands for the one-point set which consists of the point \(z\).

\section{Summation methods of Taylor Series}
\label{Sec1}
\setcounter{equation}{0}
Let \(f(z)\) be a function holomorphic in the unit disc \(\mathbb{D}\).
Such a function \(f(z)\) can be expanded in the Taylor series
\begin{equation}
\label{TSE}
f(z)=\sum\limits_{n=0}^{\infty}a_nz^n.
\end{equation}
The series on the right hand side of the equality \eqref{TSE} converges uniformly on every
compact subset of the disk \(\mathbb{D}\).  Assume that the radius of convergence of this
series is equal to one, that is
\begin{equation}
\label{CHad}
\varlimsup\limits_{n\to\infty}\sqrt[n]{|a_n|}=1\,.
\end{equation}
Then the function \(f\) can not be extended as a holomorphic function to any disc
\(\lbrace{}z:\,|z|<R\rbrace\) with \(R>1\). In other words, on the unit circle
\(\mathbb{T}\) there is at least one singular point of the function \(f\).

Assume moreover that the function \(f\) is holomorphic in some domain~\(\mathscr{D}\),
\begin{equation*}
\mathbb{D}\subset\mathscr{D}\subset\mathbb{C}\,.
\end{equation*}
Of course, the boundary \(\partial\mathscr{D}\) of  \(\mathscr{D}\) must intersect
with \(\mathbb{T}:\,\partial\mathscr{D}\cap\mathbb{T}\not=\emptyset\). Otherwise
the radius of convergence of the Taylor series \eqref{TSE} will be greater than one.

The question arises. Can the Taylor series \eqref{TSE} be summed to the function
\(f\) in some domain \(\mathscr{G}\) larger than the unit disc \(\mathbb{D}\):
\begin{equation}
\label{geom}
\mathbb{D}\subset\mathscr{G}\subseteq\mathscr{D}\,.
\end{equation}

A summation method is determined by a sequence
\(\lbrace\gamma_n(\varepsilon)\rbrace_{0\leq{}n<\infty}\), where for each \(n=0,\,1,\,2,\,3\,.\ldots\), \(\gamma_n(\varepsilon)\) is a complex valued function
defined for \(0<\varepsilon<\varepsilon_0\), \(0<\varepsilon_0\leq+\infty\),
and the following conditions are satisfied:\\[1.0ex]
\begin{enumerate}
\item[\textsf{a.}]\(\hfill\sup\limits_{\substack{0<\varepsilon<\varepsilon_0\\0\leq{}n<\infty}} |\gamma_n(\varepsilon)| <\infty\,,   \hfill\)\\[1.0ex]
\item[\textsf{b.}]\(\hfill\lim\limits_{\varepsilon\to+0}\gamma_n(\varepsilon)=1\ \ \textup{for each}\ \ n=0,\,1,\,2,\,3,\,\ldots\,, \hfill\)\\[1.0ex]
\item[\textsf{c.}]\(\hfill\lim\limits_{n\to\infty}\sqrt[n]{|\gamma_n(\varepsilon)|}=0
\ \ \textup{for each} \ \ \varepsilon\in(0,\epsilon_0)\,.\hfill\)\\[1.0ex]
\end{enumerate}

Such a sequence \(\lbrace\gamma_n(\varepsilon)\rbrace_{0\leq{}n<\infty}\) is said to be
\emph{a summing sequence}.

Let \(f_{\varepsilon}(z)\) be the function defined by the power series
\begin{equation}
\label{feps}
f_{\varepsilon}(z)=\sum\limits_{0\leq{}n<\infty}\gamma_n(\varepsilon)a_nz^n\,.
\end{equation}
The conditions \eqref{CHad} and \textsf{c.} ensure that the radius of convergence of the
power series in \eqref{feps} is equal to infinity. Thus the function \(f_{\varepsilon}(z)\)
is an entire function for each \(\varepsilon>0\).
The equality \eqref{TSE} and the conditions \eqref{CHad}, \textsf{a.} and \textsf{b.}
ensure that
\begin{equation}
\label{ctf}
\lim_{\varepsilon\to+0}f_{\varepsilon}(z)=f(z)
\end{equation}
locally uniformly for \(z\in\mathbb{D}\).

If the limiting relation \eqref{ctf} holds locally uniformly in the domain \(\mathscr{G}\),
\eqref{geom}, than we say that the Taylor series of the function \(f\) is summable to the
function \(f\) by the summation method \(\lbrace\gamma_n(\varepsilon)\rbrace_{0\leq{}n<\infty}\) in the domain \(\mathscr{G}\).

The following three summing  sequences are well known (see \cite[Notes on Chapter VIII,\,\S8.10]{Har}):
\begin{subequations}
\label{CSS}
\begin{gather}
\label{LRSS}
\gamma_n(\varepsilon)=\frac{\Gamma(1+\varepsilon{}n)}{\Gamma(1+n)},\ \ n=0,\,1,\,2,\,\,\ldots\,,\\
\label{LinSS}
\gamma_0(\varepsilon)=1,\ \ \gamma_n(\varepsilon)=e^{-\varepsilon{}n\log n}, \ \ n=1,\,2,\,\,\ldots\,,\\
\gamma_n(\varepsilon)=\frac{1}{\Gamma(1+\varepsilon n)}, \ \ n=0,\,1,\,2,\,\,\ldots\,.
\label{MLSS}
\end{gather}
\end{subequations}
Each of these three sequences sums the Taylor series \(\sum\limits_{0\leq n<\infty}z^n\) of the function \(f(z)=\frac{1}{1-z}\) to this function in the domain \(\mathbb{C}\setminus[1,+\infty[\):
\begin{equation}
\label{SSSF}
\frac{1}{1-z}=\lim_{\varepsilon\to+0}\sum\limits_{0\leq n<\infty}\gamma_n(\varepsilon)z^n, \ \ \textup{locally uniformly for} \ \ z\in \mathbb{C}\setminus[1,+\infty[\,.
\end{equation}
The result \eqref{SSSF} can be applied to the summing procedure \eqref{feps}-\eqref{ctf}
applied to an \emph{arbitrary} function \(f(z)\) whose Taylor series \eqref{TSE} has a positive radius of convergence. The domain \(\mathscr{G}\), where the limiting relation \eqref{ctf} holds, is the so called \emph{Mittag-Leffler star} of the function \(f\).
See \cite[Chapter VIII,\,\S\,8.10, Theorem 135.]{Har}. (The Mittag-Leffler star of the function \(\frac{1}{1-z}\) is the domain \(\mathbb{C}\setminus[1,+\infty[\).)

\begin{remark}
The series on the right hand side of \eqref{SSSF} is of the form
\begin{equation}
\label{SSSf}
\sum\limits_{0\leq n<\infty}A(n)z^n,
\end{equation}
where \(A(\zeta)\) is an entire function. The series of the form \eqref{SSSf} is a very classical
subject. They were considered since the 90s of the 19th century. In particular assuming that
the radius of convergence of the series \eqref{SSSf} is finite, the question on the analytic continuation of the series from the disc of convergence to a larger domain was studied.
See for example the classical papers \cite{Lea} and \cite{LeR}. The book \cite{Bie} of
L.\,Bieberbach is a fount of wisdom on the analytic continuation of Taylor series.
\end{remark}
\section{Theta summation method. Convergence.}
\label{Sec2}
\setcounter{equation}{0}
In this paper we discuss only one special summing sequence:
\begin{equation}
\label{SpSS}
\gamma_n(\varepsilon)=e^{-\varepsilon{}n^2}, \ \ n=0,\,1,\,2,\,\ldots\,.
\end{equation}
 he summation method corresponding to the summing
sequence \eqref{SpSS} is said to be the theta summation method.

We apply the theta summation method the Taylor series  \(\sum\limits_{0\leq n<\infty}z^n\) of the function \(f(z)=\frac{1}{1-z}\).

We succeeded in finding a precise answer to the following question: \textit{for what \(z\in\mathbb{C}\setminus\{1\}\)
the limiting relation
\begin{subequations}
\label{TeSR}
\begin{align}
\label{TeSRa}
\frac{1}{1-z}=\lim_{\varepsilon\to+0}f_{\varepsilon}(z),\\
\intertext{\textit{holds, where}}
f_\varepsilon(z)\stackrel{\scriptscriptstyle{\text{def}}}{=}
\sum_{0\leq{}n<\infty}\!\!e^{-\varepsilon{}n^2}z^n\,.
\label{TeSRb}
\end{align}
\end{subequations}
}

\begin{lemma}
\label{L11} Given \(\varepsilon>0\), the series \eqref{TeSRb} converges for every \(z\in\mathbb{C}\)
locally uniformly with respect to \(z\). For each \(\varepsilon>0\), the function \(f_\varepsilon(z)\) defined by this series
is an entire function of zero order:
 \begin{equation}
 \label{ZeOr}
 \lim\limits_{r\to\infty}\frac{\ln\ln{}M_{f_\varepsilon}(r)}{\ln r}=0,\ \ \text{where} \ \
 M_{f_\varepsilon}(r)=\max\limits_{z:|z|\leq{}r}|f_\varepsilon(z)|\,.
 \end{equation}
 \end{lemma}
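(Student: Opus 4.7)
The plan splits naturally into two parts. First, to establish locally uniform convergence on $\mathbb{C}$, I would apply the Cauchy--Hadamard formula to the coefficients $a_n = e^{-\varepsilon n^2}$: since $|a_n|^{1/n} = e^{-\varepsilon n} \to 0$ as $n \to \infty$, the radius of convergence of the series \eqref{TeSRb} is infinite, which at once yields locally uniform convergence on all of $\mathbb{C}$ and (by the standard Weierstrass argument) that $f_\varepsilon$ is entire.

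For the zero-order claim, the natural strategy is an explicit upper bound on $M_{f_\varepsilon}(r)$ obtained by completing the square in the exponent. For $|z| \leq r$ one has $|f_\varepsilon(z)| \leq \sum_{n \geq 0} e^{-\varepsilon n^2 + n \log r}$, and writing
$$-\varepsilon n^2 + n\log r = \frac{(\log r)^2}{4\varepsilon} - \varepsilon\!\left(n - \frac{\log r}{2\varepsilon}\right)^{\!2}$$
factors out the expected principal term. What remains is to bound $\sum_{n \geq 0} \exp\!\left(-\varepsilon(n - \log r/(2\varepsilon))^2\right)$ uniformly in $r$; this is the one place where a brief separate argument is needed, and the cleanest route is to majorise this sum by a full lattice theta-type series $\sum_{k \in \mathbb{Z}} e^{-\varepsilon k^2/4}$ (via a crude compare-to-integer-shift argument) or, equivalently, by a Gaussian integral. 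Denoting the resulting $r$-independent bound by $C_\varepsilon$, one obtains
$$M_{f_\varepsilon}(r) \leq C_\varepsilon \exp\!\left(\frac{(\log r)^2}{4\varepsilon}\right),$$
so that $\log\log M_{f_\varepsilon}(r) \leq 2\log\log r + O(1)$ as $r \to \infty$. Dividing by $\log r$ gives precisely \eqref{ZeOr}.

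I do not anticipate any genuine obstacle: the Gaussian shape of the coefficients forces the completion of the square, and the only judgement call is how tidily to package the elementary bound on the shifted Gaussian sum. An alternative would be to skip $M_{f_\varepsilon}(r)$ altogether and read off the order from the standard coefficient formula $\rho = \limsup_{n\to\infty} \tfrac{n\log n}{\log(1/|a_n|)} = \limsup_{n\to\infty} \tfrac{\log n}{\varepsilon n} = 0$; however, the explicit majorant above is more informative and is the kind of estimate that is likely to be reused in the subsequent sections of the paper.
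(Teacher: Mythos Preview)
The paper states this lemma without proof, treating it as elementary. Your argument is correct: the Cauchy--Hadamard computation $|a_n|^{1/n}=e^{-\varepsilon n}\to 0$ handles the first assertion, and your completion-of-the-square bound $M_{f_\varepsilon}(r)\le C_\varepsilon\exp\!\big((\log r)^2/(4\varepsilon)\big)$ gives $\limsup_{r\to\infty}\frac{\log\log M_{f_\varepsilon}(r)}{\log r}\le 0$; combined with the trivial observation that $f_\varepsilon$ is transcendental (all Taylor coefficients are nonzero), so that $M_{f_\varepsilon}(r)\to\infty$ and the ratio is eventually nonnegative, this yields the limit in \eqref{ZeOr}. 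Your closing remark that the explicit majorant is ``likely to be reused in the subsequent sections'' turns out not to be the case---the paper proceeds via the Fourier representation \eqref{IRSS} rather than via coefficient estimates---so the shorter coefficient-formula route $\rho=\limsup_{n}\dfrac{n\log n}{\varepsilon n^{2}}=0$ that you mention would have served just as well, and is presumably what the author had in mind when omitting the proof.
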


In contrast to the cases of the limiting relations \eqref{SSSF} with \(\gamma_n(\varepsilon)\) of the form \eqref{CSS}, the set \(\mathscr{G}\) of those \(z\), where the limiting relation \eqref{TeSRa}
holds, is essentially smaller than the domain \(\mathbb{C}\setminus[1,+\infty[\). (Nevertheless, still \(\mathscr{G}\supset\mathbb{D},\,\mathscr{G}\not=\mathbb{D}\).)

The s  tarting point of our reasoning is the following Fourier representation of the
summing sequence
\(\lbrace{e^{-\varepsilon{}n^2}}\rbrace_{0\leq{}n<\infty}\):
\begin{equation}
\label{IRSS}
e^{-\varepsilon{}n^2}=\frac{1}{2\sqrt{\pi\varepsilon}}
\int\limits_{-\infty}^{+\infty}e^{-\xi^2/4\varepsilon}
e^{in\xi}\,d\xi\,, \ \ \varepsilon>0\,.
\end{equation}
Substituting \eqref{IRSS} into \eqref{TeSRb}, we obtain the following representation
for the function \(f_\varepsilon(z)\), \eqref{TeSRb},
\begin{equation}
\label{IRFe}
f_\varepsilon(z)=\frac{1}{2\sqrt{\pi\varepsilon}}\sum_{0\leq{}n<\infty}
\int\limits_{-\infty}^{+\infty}e^{-\xi^2/4\varepsilon}
e^{in\xi}z^n\,d\xi\,, \ \ \varepsilon>0\,,
\end{equation}
which holds for arbitrary \(z\in\mathbb{C}\). For \(z\in\mathbb{D}\), we can change order
of summation and integration in \eqref{IRFe}. Thus
\begin{equation}
\label{IRFee}
f_\varepsilon(z)=\frac{1}{2\sqrt{\pi\varepsilon}}
\int\limits_{-\infty}^{+\infty}e^{-\xi^2/4\varepsilon}\frac{1}{1-ze^{i\xi}}
\,d\xi\,, \ \ \varepsilon>0\,,\,z\in\mathbb{D}.
\end{equation}
Splitting the integral on the right hand side of \eqref{IRFee}, we obtain
\begin{equation}
\label{SpI}
  f_\varepsilon(z)=f^{+}_{\varepsilon}(z)+f^{-}_{\varepsilon}(z),
\end{equation}
where
\begin{subequations}
\label{SpIn}
\begin{align}
\label{SpIna}
f^{+}_{\varepsilon}(z)&=\frac{1}{2\sqrt{\pi\varepsilon}}
\int\limits_{0}^{+\infty}e^{-\xi^2/4\varepsilon}
\frac{1}{1-ze^{i\xi}}\,d\xi,&&\varepsilon>0\,,
\,z\in\mathbb{D}\,,\\
\label{SpInb}
f^{-}_{\varepsilon}(z)&=\frac{1}{2\sqrt{\pi\varepsilon}}
\int\limits_{0}^{+\infty}e^{-\xi^2/4\varepsilon}
\frac{1}{1-ze^{-i\xi}}\,d\xi,&&\varepsilon>0\,,
\,z\in\mathbb{D}\,.
\end{align}
\end{subequations}
Both the integrals in \eqref{SpIn} are taken over the ray \([0,+\infty[\). From the formulas \eqref{SpIn} is evident that each of the functions \(f^{+}_{\varepsilon}(z)\),
\(f^{-}_{\varepsilon}(z)\) is holomorphic in the unit disc \(\mathbb{D}\). Moreover,
\begin{subequations}
\label{BLR}
\begin{align}
\label{BLRa}
\lim_{\varepsilon\to+0}f^{+}_{\varepsilon}(z)&=\frac{1}{2}\cdot\frac{1}{1-z}\,, \\
\label{BLRb}
\lim_{\varepsilon\to+0}f^{-}_{\varepsilon}(z)&=\frac{1}{2}\cdot\frac{1}{1-z}\,,
\end{align}
\end{subequations}
for \(z\in\mathbb{D}\) locally uniformly in \(\mathbb{D}\).

It turns out that each of the functions of the family
\(\lbrace{}f_{\varepsilon}^{+}\rbrace_{\varepsilon>0}\)
can be continued analytically from the unit disc \(\mathbb{D}\) to a domain
\(\mathscr{G}^{+}\), \(\mathscr{G}^{+}\supset\mathbb{D}\)\,, and
each of the functions of the family
\(\lbrace{}f_{\varepsilon}^{-}\rbrace_{\varepsilon>0}\)
can be continued analytically from the unit disc \(\mathbb{D}\) to a domain
\(\mathscr{G}^{-}\), \(\mathscr{G}^{-}\supset\mathbb{D}\)\,.
We describe the domains \(\mathscr{G}^{+}\) and \(\mathscr{G}^{-}\) as follows.
Let \(\mathscr{S}^{+}\) and \(\mathscr{S}^{-}\) be spiral-shaped curves:
\begin{subequations}
\label{SpCu}
\begin{align}
\label{SpCu+}
\mathscr{S}^{+}=\lbrace\zeta\in\mathbb{C}:\,\zeta=&e^{(1-i)t},\,
0\leq{}t<\infty\rbrace\,,\\
\label{SpCu-}
\mathscr{S}^{-}=\lbrace\zeta\in\mathbb{C}:\,\zeta=&e^{(1+i)t},\,
0\leq{}t<\infty\rbrace\,.
\end{align}
\end{subequations}
By definition,
\begin{equation}
\label{DefDom}
\mathscr{G}^{+}=\mathbb{C}\setminus\mathscr{S}^{+}\,, \hspace{4.0ex}
\mathscr{G}^{-}=\mathbb{C}\setminus\mathscr{S}^{-}\,.
\end{equation}
It is clear that the domains \(\mathscr{G}^{+}\) and \(\mathscr{G}^{-}\) are simply
connected, and
\begin{equation}
\label{DSub}
\mathbb{D}\subset\mathscr{G}^{+},\hspace{4.0ex}\mathbb{D}\subset\mathscr{G}^{-}\,.
\end{equation}
To see that the functions \(f^{+}_{\varepsilon}(z)\),\,\(f^{-}_{\varepsilon}(z)\)
can be continued analytically from the disc \(\mathbb{D}\) to the domains
\(\mathscr{G}^{+}\) and \(\mathscr{G}^{-}\) respectively, we modify the integral representations  \eqref{SpIn} of these functions rotating a ray of integration.
For fixed \(z\in\mathbb{D}\) and \(\varepsilon>0\), the function
\(\displaystyle{e^{-\xi^2/4\varepsilon}\frac{1}{1-ze^{i\xi}}}\) which appears in \eqref{SpIna}
is holomorphic with respect to \(\xi\) within the angular domain
\(0\leq\arg\zeta\leq\frac{\pi}{4}\). Moreover for each fixed
\(\vartheta,\,0<\vartheta<\frac{\pi}{4}\), this function is fast decaying as \(|\xi|\to\infty\) within the angular domain \(0\leq\arg\xi\leq\vartheta\):
\begin{multline}
\label{FaDe}
\hfill\left|e^{-\xi^2/4\varepsilon}\frac{1}{1-ze^{i\xi}}\right|\leq
\frac{1}{1-|z|}e^{-|\xi|^2\cos2\vartheta/4\varepsilon}\,,\hfill\hfill\hfill\\
z\in\mathbb{D},\hspace{1.5ex} 0\leq|\xi|<\infty,0\leq\arg\xi\leq\vartheta\,.
\end{multline}
 Therefore in \eqref{SpIna} we can rotate a ray of integration counterclockwise :
 \begin{subequations}
 \label{SpRo}
 \begin{multline}
 \label{SpRo+}
 f^{+}_{\varepsilon}(z)=\frac{e^{i\vartheta}}{2\sqrt{\pi\varepsilon}}
\int\limits_{0}^{+\infty}
e^{-\xi^2\textstyle{e}^{\scriptscriptstyle{2i\vartheta}}/4\varepsilon}
\frac{1}{1-ze^{i\xi{}\textstyle{e}^{\scriptscriptstyle{i\vartheta}}}}\,d\xi{}\\
=\frac{e^{i\vartheta}}{2\sqrt{\pi}}
\int\limits_{0}^{+\infty}
e^{-\xi^2\textstyle{e}^{\scriptscriptstyle{2i\vartheta}}/4}
\frac{1}{1-ze^{i\sqrt{\varepsilon}\xi{}\textstyle{e}^{\scriptscriptstyle{i\vartheta}}}}\,d\xi, \hspace{1.5ex}\varepsilon>0\,,\hspace{0.5ex}
\,z\in\mathbb{D}\,,\hspace{0.5ex}0\leq\vartheta<\frac{\pi}{4}\,.
 \end{multline}
 Analogously  in \eqref{SpInb} we can rotate a ray of integration clockwise :
 \begin{multline}
 \label{SpRo-}
 f^{-}_{\varepsilon}(z)=\frac{e^{-i\vartheta}}{2\sqrt{\pi}}
\int\limits_{0}^{+\infty}
e^{-\xi^2\textstyle{e}^{\scriptscriptstyle{-2i\vartheta}}/4}
\frac{1}{1-ze^{i\sqrt{\varepsilon}\xi{}\textstyle{e}^{\scriptscriptstyle{-i\vartheta}}}}\,d\xi,\\ {}\varepsilon>0\,,
\,z\in\mathbb{D}\,,\hspace{1.0ex}0\leq-\vartheta<\frac{\pi}{4}\,.
 \end{multline}
 \end{subequations}
 For \(\vartheta:\,0<\vartheta<\frac{\pi}{2}\), let
 \(\mathscr{S}_{\vartheta}^{+}\) and \(\mathscr{S}_{\vartheta}^{-}\) be spiral curves
 \begin{subequations}
\label{SpCuT}
\begin{align}
\label{SpCuT+}
\mathscr{S}_{\vartheta}^{+}=\lbrace\zeta\in\mathbb{C}:\,\zeta=&e^{(\tg\vartheta-i)t},\,
0\leq{}t<\infty\rbrace\,,\\
\label{SpCuT-}
\mathscr{S}_{\vartheta}^{-}=\lbrace\zeta\in\mathbb{C}:\,\zeta=&e^{(\tg\vartheta+i)t},\,
0\leq{}t<\infty\rbrace\,,
\end{align}
\end{subequations}
and let \(\mathscr{G}_{\vartheta}^{+}\) and
\(\mathscr{G}_{\vartheta}^{-}\) be the sets
\begin{equation}
\label{DefDomT}
\mathscr{G}_{\vartheta}^{+}=\mathbb{C}\setminus\mathscr{S}_{\vartheta}^{+}\,, \hspace{4.0ex}
\mathscr{G}_{\vartheta}^{-}=\mathbb{C}\setminus\mathscr{S}_{\vartheta}^{-}\,.
\end{equation}
(In this notation, the sets \(\mathscr{S}^{\pm}, \mathscr{G}^{\pm}\) introduced in
\eqref{SpCu}-\eqref{DefDom} are \(\mathscr{S}^{\pm}_{\pi/4},\,\mathscr{G}^{\pm}_{\pi/4}\)).\\
For \(0<\theta<\frac{\pi}{2}\), each of the sets \(\mathscr{G}_{\vartheta}^{+}\),\,\(\mathscr{G}_{\vartheta}^{-}\) is a connected
open set (a domain) in \(\mathbb{C}\) containing the unit disc:
\begin{equation}
\mathbb{D}\subset\mathscr{G}_{\vartheta}^{+},\hspace{1.5ex}
\mathbb{D}\subset\mathscr{G}_{\vartheta}^{-},\hspace{1.5ex}0<\vartheta<\tfrac{\pi}{2}\,.
\end{equation}
The curve \(\mathscr{S}_{\vartheta}^{+}\) is the boundary of the domain \(\mathscr{G}_{\vartheta}^{+}\).
When \(\xi\) runs over the positive half-axis \([0,+\infty[\)\,, the point
\(e^{-i\varepsilon\xi{}\textstyle{e}^{\scriptscriptstyle{i\vartheta}}}\) runs over the curve
\(\mathscr{S}_{\vartheta}^{+}\) for every fixed \(\varepsilon>0\). Therefore for \(z\in\mathscr{G}_{\vartheta}\) and \(\xi\in[0,\infty[\), the value
\(\big|e^{-i\varepsilon\xi{}\textstyle{e}^{\scriptscriptstyle{i\vartheta}}}-z\big|\) is bounded away
from zero:
\begin{equation*}
\left|e^{-i\varepsilon\xi{}\textstyle{e}^{\scriptscriptstyle{i\vartheta}}}-z\right|
\geq\textup{dist}\,(z,\mathscr{S}_{\vartheta}^{+})>0,\hspace{1.5ex}
z\in\mathscr{G}_{\vartheta}^{+}\,,\ \ 0\leq\xi<\infty.
\end{equation*}
(Here \(\textup{dist}\,(z,\mathscr{S}_{\vartheta}^{+})\) is the distance from the point
\(z\in\mathscr{G}_{\vartheta}^{+}\) to the set \(\mathscr{S}_{\vartheta}^{+}\).)
Thus, for fixed \(\vartheta\in[\vartheta,\frac{\pi}{2}[\) \ and \(z\in\mathscr{G}_{\vartheta}^{+}\),
the function \(\displaystyle{\frac{1}{1-ze^{i\varepsilon\xi{}
\textstyle{e}^{\scriptscriptstyle{i\vartheta}}}}}=
1+\frac{z}{e^{-i\varepsilon\xi{}\textstyle{e}^{\scriptscriptstyle{i\vartheta}}}-z}\) of the variable \(\xi\) is bounded on the positive
half-axis \(0\leq\xi<\infty\)\,:
\begin{equation}
\label{fet+}
\left|\frac{1}{1-ze^{i\varepsilon\xi{}
\textstyle{e}^{\scriptscriptstyle{i\vartheta}}}}\right|\leq
1+\frac{|z|}{\textup{dist}\,(z,\mathscr{S}^{+}_{\vartheta})},\hspace{1.5ex}
z\in\mathscr{G}^{+}_{\vartheta},\ \ 0\leq\xi<\infty\,,\varepsilon>0\,.
\end{equation}
For \(\vartheta:\,0<\vartheta<\frac{\pi}{4}\), and \(\varepsilon>0\), let us define
\begin{equation}
\label{fte+}
f_{\varepsilon,\vartheta}^{+}(z)=
\frac{e^{i\vartheta}}{2\sqrt{\pi}}
\int\limits_{0}^{+\infty}
e^{-\xi^2\textstyle{e}^{\scriptscriptstyle{2i\vartheta}}/4}
\frac{1}{1-ze^{i\varepsilon\xi{}\textstyle{e}^{\scriptscriptstyle{i\vartheta}}}}\,d\xi, \hspace{1.5ex}
\,z\in\mathscr{G}_{\vartheta}^{+}\,.
\end{equation}
In view of the equality
\begin{equation}
\label{Moe}
\Big|e^{-\xi^2\textstyle{e}^{\scriptscriptstyle{2i\vartheta}}/4}\Big|=
e^{-\xi^2\cos2\theta},\ \ 0\leq\xi<\infty\,,
\end{equation}
and the estimates \eqref{fet+}, the function \(e^{-\xi^2\textstyle{e}^{\scriptscriptstyle{2i\vartheta}}/4}
\dfrac{1}{1-ze^{i\varepsilon\xi{}\textstyle{e}^{\scriptscriptstyle{i\vartheta}}}}\)
which appears under the integral \eqref{fte+} admits the estimate
\begin{multline}
\label{Coes}
\left|e^{-\xi^2\textstyle{e}^{\scriptscriptstyle{2i\vartheta}}/4}
\dfrac{1}{1-ze^{i\varepsilon\xi{}\textstyle{e}^{\scriptscriptstyle{i\vartheta}}}}\right|
\leq\left(1+\frac{|z|}{\textup{dist}\,(z,\mathscr{S}^{+}_{\vartheta})}\right)
\cdot{}e^{-\xi^2\cos2\theta}
,\\
  0\leq\xi<\infty\,,\ \ \varepsilon>0\,.
\end{multline}
Since the function \(e^{-\xi^2\cos2\theta}\) is integrable:
\begin{equation}
\label{CIn}
\int\limits_{0}^{\infty}e^{-\xi^2\cos2\theta}\,d\xi=\sqrt{\tfrac{\pi}{\cos{2\vartheta}}}\,,
\end{equation}
the integral in \eqref{fte+} exists. \emph{The function \(f_{\varepsilon,\vartheta}^{+}(z)\)}
which is determined by means of this integral \emph{is well defined and holomorphic for \(z\in\mathscr{G}_{\vartheta}^{+}\)}. In view of \eqref{SpRo+},
\begin{equation}
\label{CoIn}
f_{\varepsilon}^{+}(z)=f_{\varepsilon,\vartheta}^{+}(z) \ \ \textup{for} \ \ z\in\mathbb{D}\,.
\end{equation}
Thus \emph{the function \(f_{\varepsilon,\vartheta}^{+}\) is an analytic continuation of the function \(f_{\varepsilon}^{+}\) from the unit disc \(\mathbb{D}\) to the domain
\(\mathscr{G}_{\vartheta}^{+}\).}

From \eqref{fte+}, \eqref{Coes}, \eqref{CIn} we conclude that the family
\(\lbrace{}f_{\varepsilon,\vartheta}^{+}\rbrace_{\varepsilon>0}\) is
locally bounded in the domain \(\mathscr{G}_{\vartheta}^{+}\), where the bound is uniform
with respect to \(\varepsilon\):
\begin{equation}
\label{LUEs}
\big|\lbrace{}f_{\varepsilon,\vartheta}^{+}(z)\big|\leq\sqrt{\tfrac{1}{\cos2\vartheta}}
\cdot\left(1+\frac{|z|}{\textup{dist}\,(z,\mathscr{S}^{+}_{\vartheta})}\right)\,, \ \
z\in\mathscr{G}_{\vartheta}^{+}\,,\ \varepsilon>0.
\end{equation}
In particular, the family \(\lbrace{}f_{\varepsilon,\vartheta}^{+}(z)\rbrace_{\varepsilon>0}\) is normal in
the domain \(\mathscr{G}_{\vartheta}^{+}\).
According to \eqref{CoIn}, the limiting relation \eqref{BLRa} can be interpreted as
\[\lim\limits_{\varepsilon\to0}f_{\varepsilon,\vartheta}^{+}(z)=\frac{1}{2(1-z)},\
\ z\in\mathbb{D}\,.\]
From this and from the normality of the family \(\lbrace{}f_{\varepsilon,\vartheta}^{+}(z)\rbrace_{\varepsilon>0}\) in \(\mathscr{G}_{\vartheta}^{+}\) it follows that
\begin{equation}
\label{ELR}
\lim\limits_{\varepsilon\to0}f_{\varepsilon,\vartheta}^{+}(z)=\frac{1}{2(1-z)}
\ \ \textup{for} \ \ z\in\mathscr{G}_{\vartheta}^{+} \ \ \textup{locally uniformly}\,.
\end{equation}
The relation \eqref{ELR} can be also obtained from \eqref{fte+} and the Lebesgue
dominated convergence theorem.

Let us summarize the above-stated as
\begin{lemma}
\label{L2.1}For all numbers \(\vartheta\) and \(\varepsilon\), \(0<\vartheta<\pi/4\),
 \(0<\varepsilon\), there exists a
function \(f^{+}_{\varepsilon,\vartheta}\,(\,.\,)\) which possess the properties:
\begin{enumerate}
\item The function \(f^{+}_{\varepsilon,\vartheta}\,(\,.\,)\) is holomorphic in the domain \(\mathscr{G}^{+}_{\vartheta}\) and satisfies the estimate \eqref{LUEs} there.
\item The
    function \(f^{+}_{\varepsilon,\vartheta}\,(\,.\,)\) is an analytic continuation of the
    function \(f^{+}_{\varepsilon}(\,.\,)\), \eqref{SpIna}, from the unit disc \(\mathbb{D}\) to the domain \(\mathscr{G}^{+}_{\vartheta}\), i.e. the equality \eqref{CoIn} holds.
\item The limiting relation \eqref{ELR} holds.
\end{enumerate}
\end{lemma}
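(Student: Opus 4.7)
The plan is to define $f^+_{\varepsilon,\vartheta}$ by the integral in \eqref{fte+}, and then to verify the three conclusions in turn, each by assembling estimates and identities already established in the preceding discussion. For claim (1), the bound \eqref{Coes} exhibits an integrable $\xi$-majorant $\bigl(1+\tfrac{|z|}{\textup{dist}(z,\mathscr{S}^+_\vartheta)}\bigr)e^{-\xi^2\cos 2\vartheta}$ which is uniform over $z$ in any compact subset of $\mathscr{G}^+_\vartheta$; the integrand is holomorphic in $z$ on $\mathscr{G}^+_\vartheta$ for each fixed $\xi\ge 0$, since $\mathscr{G}^+_\vartheta$ is by definition the complement of the curve $\{e^{-i\varepsilon\xi e^{i\vartheta}}:\xi\ge 0\}=\mathscr{S}^+_\vartheta$ along which the denominator could vanish. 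Holomorphy of $f^+_{\varepsilon,\vartheta}$ on $\mathscr{G}^+_\vartheta$ then follows by a standard Morera plus Fubini argument, and the estimate \eqref{LUEs} is obtained at once by combining \eqref{Coes} with \eqref{CIn}.

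For claim (2), I would invoke the contour rotation already indicated in the derivation of \eqref{SpRo+}. For $z\in\mathbb{D}$ and fixed $\varepsilon>0$ the map $\xi\mapsto e^{-\xi^2/4\varepsilon}/(1-ze^{i\xi})$ is holomorphic in the closed sector $0\le\arg\xi\le\vartheta$, and the decay estimate \eqref{FaDe} guarantees that the contribution from the circular arc of radius $R$ in this sector tends to $0$ as $R\to\infty$. Cauchy's theorem applied to the sector of radius $R$ then gives equality of the integrals along the positive real axis and along the ray of argument $\vartheta$; after the change of variable $\xi\mapsto\sqrt{\varepsilon}\,\xi e^{i\vartheta}$ this becomes precisely \eqref{CoIn}. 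The main delicate point, and the place where the hypothesis $z\in\mathbb{D}$ is essential, is to see that no pole of $1/(1-ze^{i\xi})$ lies in the closed sector; this follows from $|ze^{i\xi}|=|z|\,e^{-\operatorname{Im}\xi}\le|z|<1$ for $\xi$ in the sector.

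For claim (3), one route is to apply the Lebesgue dominated convergence theorem directly in \eqref{fte+}: as $\varepsilon\to+0$ the integrand tends pointwise to $e^{-\xi^2 e^{2i\vartheta}/4}/(1-z)$ and is dominated by the $\varepsilon$-independent majorant of \eqref{Coes}, and a further rotation of the resulting Gaussian integral back to the real axis produces the factor $1/2$ in front of $1/(1-z)$. A more economical route uses \eqref{LUEs} together with Montel's theorem to assert normality of the family $\{f^+_{\varepsilon,\vartheta}\}_{\varepsilon>0}$ in $\mathscr{G}^+_\vartheta$, and then propagates the known boundary behavior \eqref{BLRa} from $\mathbb{D}$ to the whole domain by Vitali's theorem. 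Either way the limiting relation \eqref{ELR} follows, completing the proof.
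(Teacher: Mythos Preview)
Your proposal is correct and follows essentially the same route as the paper, which presents Lemma~\ref{L2.1} explicitly as a summary of the preceding discussion: define $f^{+}_{\varepsilon,\vartheta}$ by \eqref{fte+}, obtain holomorphy and \eqref{LUEs} from the majorant \eqref{Coes}--\eqref{CIn}, derive \eqref{CoIn} by the contour rotation \eqref{SpRo+} justified via \eqref{FaDe}, and get \eqref{ELR} either from normality plus \eqref{BLRa} (Vitali) or by dominated convergence. Your treatment of the ``no poles in the sector'' issue and of the Gaussian normalization in the dominated-convergence route are both handled correctly.
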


If \(\vartheta=\pi/4\), the function \(f^{+}_{\varepsilon,\pi/4}\) can not be defined by   the integral \eqref{fte+} with \(\theta=\pi/4\). This integral does not converges absolutely.%
 \footnote{We still
 can assign a meaning to the integral \eqref{fte+} (with \(\vartheta=\pi/4\)) by some regularization method. For example we can
 consider this integral as an improper integral. However even if we define the function
 \(f^{+}_{\varepsilon,\pi/4}\)  by an improper integral,
 we would be unable to prove the limiting relation \eqref{ELR} starting from such a
 definition.} To define the function \(f^{+}_{\varepsilon,\pi/4}\) in the domain
\(\mathscr{G}^{+}_{\pi/4}\), we glue together the functions \(\lbrace{}f^{+}_{\varepsilon,\vartheta}\rbrace_{0<\vartheta<\pi/4}\) into a single function.

\begin{remark}
If \(\vartheta^{\prime},\vartheta^{\prime\prime}\!\in\,\,]0,\pi/4[\,\), the functions \(f^{+}_{\varepsilon,\theta^{\prime}}\)
and \(f^{+}_{\varepsilon,\theta^{\prime\prime}}\) are defined and holomorphic in the domains
\(\mathscr{G}^{+}_{\vartheta^{\prime}}\) and \(\mathscr{G}^{+}_{\vartheta^{\prime\prime}}\) respectively. The unit disc \(\mathbb{D}\) is contained in the intersection of these domains:
\(\mathbb{D}\subset\mathscr{G}^{+}_{\vartheta^{\prime}}
\bigcap\mathscr{G}^{+}_{\vartheta^{\prime\prime}}\). The functions \(f^{+}_{\varepsilon,\theta^{\prime}}\) and \(f^{+}_{\varepsilon,\theta^{\prime\prime}}\)
coincide on \(D\):
\(f^{+}_{\varepsilon,\theta^{\prime}}(z)=f^{+}_{\varepsilon,\theta^{\prime\prime}}\,(=
f^{+}_{\varepsilon}(z))\) for \(z\in\mathbb{D}\). However if
\(\theta^{\prime}\not=\theta^{\prime\prime}\), then \textsf{the intersection
\(\mathscr{G}^{+}_{\vartheta^{\prime}}
\bigcap\mathscr{G}^{+}_{\vartheta^{\prime\prime}}\) is not connected.}
Therefore we can only conclude  that the functions  \(f^{+}_{\varepsilon,\theta^{\prime}}\) and \(f^{+}_{\varepsilon,\theta^{\prime\prime}}\)
coincide on the connected component
of the open set \(\mathscr{G}^{+}_{\vartheta^{\prime}}
\bigcap\mathscr{G}^{+}_{\vartheta^{\prime\prime}}\) that contains the origin. This circumstance complicates the reasoning a little.  To carry out the reasoning smoothly, we introduce
an auxiliary monotonic sequence of \textsf{connected} open sets \(O_n\) with compact closures \(\overline{O_n}\) which sequence
\textsf{exhausts} the domain \(\mathscr{G}^{+}_{\pi/4}\).
\end{remark}

A simple geometric construction\footnote{Such a construction can be done in many different ways. We omit a formal geometric construction of the sequence \(\lbrace{}O_n\rbrace_{1\leq{}n<\infty}\).} shows that
the domain \(\mathscr{G}^{+}=\mathscr{G}_{\pi/4}^{+}\) can be represented
as
\begin{equation}
\label{Repr}
\mathscr{G}_{\pi/4}^{+}=\bigcup_{1\leq{}n<\infty}O_n,
\end{equation}
where the sequence \(\lbrace{}O_n\rbrace_{1\leq{}n<\infty}\) satisfies the conditions:
\begin{enumerate}
\item
Each \(O_n\) is an open set;
\item
The closure \(\overline{O_n}\) of the set \(O_n\) is a compact set which is
contained in the domain \(\mathscr{G}_{\pi/4}^{+}\):
{\ }\\[-4.0ex]
\begin{multline}
\label{PropOn}
\hfill
\overline{O_n} \ \textup{is a compact set}, \ \overline{O_n}\subset\mathscr{G}_{\pi/4}^{+};
\hfill
\end{multline}
\item Each \(O_n\) is a connected set.
\item
 The sequence \(\lbrace{}O_n\rbrace_{1\leq{}n<\infty}\) increases:
{\ }\\[-4.0ex]
\begin{multline}
\label{Increa}
\hfill
O_1\subseteq{}O_2\subseteq{}O_3\,\ldots\,;
\hfill
\end{multline}{\ }\\[-6.0ex]
\item Every set \(O_n\), \(n=1,\,2,\,3\ldots\)\,, contains the disc \(\mathbb{D}_{1/2}\),
\,\(\mathbb{D}_{1/2}=\lbrace\zeta\in\mathbb{C}:\,|\zeta|<1/2\rbrace\,:\)
{\ }\\[-4.0ex]
\begin{multline}
\label{Comm}
\hfill
\mathbb{D}_{1/2}\subset{}O_n,\,\,n=1,\,2,\,3,\,\ldots\,.\hfill
\end{multline}
\end{enumerate}

Let us choose and fix such a sequence of sets \(\lbrace{}O_n\rbrace_{1\leq{}n<\infty}\).

For \(\theta\in\,]0,\pi/2[\)\,, the domain \(\mathscr{G}_{\vartheta}^{+}\) depends on
\(\vartheta\) continuously, where the convergence of domains is the
kernel convergence in the sense of Caratheodory. (Regarding the notion of kernel convergence, we refer to \cite[section 1.4]{Pom}.)
The relation \(\lim\limits_{\vartheta\to\pi/4-0}\mathscr{G}_{\vartheta}=\mathscr{G}_{\pi/4}\)
means, in particular, that for every compact set \(K\),  \(K\in\mathscr{G}_{\pi/4}\),
there exists \(\vartheta_K\), \(0<\vartheta_K<\pi/4\), such that \(K\subset\mathscr{G}_{\vartheta}\) for \(\vartheta:\,\vartheta_K\leq\vartheta<\pi/4\).

By choosing  the set \(\overline{O_n}\) as \(K\), \eqref{PropOn}, we conclude that there exists \(\vartheta_n\), \(0<\theta_n<\pi/4\), such that
\begin{equation}
\label{Tetn}
\overline{O}_n\subset\mathscr{G}^{+}_{\vartheta_n}\,.
\end{equation}

As  was stated in Lemma \ref{L2.1}, the function \(f^{+}_{\varepsilon,\theta_n}\)
is well defined and holomorphic in the domain \(\mathscr{G}^{+}_{\vartheta_n}\).
In particular, the function \(f^{+}_{\varepsilon,\theta_n}\) is well defined and holomorphic in the domain \(O_n\).

If \(n_1<n_2\), then the functions \(f^{+}_{\varepsilon,\theta_{n_1}}\)
and \(f^{+}_{\varepsilon,\theta_{n_2}}\) are defined and holomorphic in the domains
\(\mathscr{G}^{+}_{\vartheta_{n_1}}\) and \(\mathscr{G}^{+}_{\vartheta_{n_2}}\) respectively.
Since
\begin{equation*}
f^{+}_{\varepsilon,\theta_{n_1}}(z)=f^{+}_{\varepsilon,\theta_{n_2}}(z) \
(\,=f^{+}_{\varepsilon}(z)\,) \ \text{ for } \  z\in\mathbb{D}_{1/2}
\end{equation*}
and \(\mathbb{D}_{1/2}\subset{}O_{n_1}\subset{}O_{n_2}\),
we conclude that
\begin{equation}
\label{Her}
f^{+}_{\varepsilon,\theta_{n_1}}(z)=f^{+}_{\varepsilon,\theta_{n_2}}(z) \
\text{ for } \ z\in{}O_{n_1}, \ \ n_1<n_2\,.
\end{equation}
In view of \eqref{Repr}, \eqref{Increa} and \eqref{Her}, the sequence of functions
\(\lbrace{}f^{+}_{\varepsilon,\theta_n}\rbrace_{1\leq{}n<\infty}\)
    can be glued together into a single function, which is defined
    on the set \(\bigcup\limits_{n}O_n=\mathscr{G}^{+}_{\pi/4}\). We denote this function by \(f^{+}_{\varepsilon,\pi/4}\):
\begin{equation}
\label{Restr}
f^{+}_{\varepsilon,\pi/4}(z)=f^{+}_{\varepsilon,\vartheta_{n}}(z) \ \text{ for } \ z\in{}O_n\,,\
\ n=1,\,2,\,3,\,\ldots\,.
\end{equation}
{\ }\\
The equalities \eqref{Repr},\eqref{Restr} serve as a \emph{definition} of the function \(f^{+}_{\varepsilon,\pi/4}\) in the domain \(\mathscr{G}^{+}_{\pi/4}\). By virtue of \eqref{Her}, this definition is non-contradictory.

\begin{lemma}
\label{L22}
For every \(\varepsilon,\,\varepsilon>0\), there exists a function \(f^{+}_{\varepsilon,\,\pi/4}\) such that
\begin{enumerate}
\item
The function \(f^{+}_{\varepsilon,\,\pi/4}\,(\,.\,)\) is holomorphic in the domain \(\mathscr{G}^{+}_{\varepsilon/4}\).
\item
The function \(f^{+}_{\varepsilon,\pi/4}\,(\,.\,)\) is an analytic continuation of the
function \(f^{+}_{\varepsilon}(\,.\,)\), which was defined by \eqref{SpIna}, from the unit disc \(\mathbb{D}\) to the domain \(\mathscr{G}^{+}_{\pi/4}\), i.e. the equality
\begin{equation}
\label{AnCoP}
f^{+}_{\varepsilon,\pi/4}\,(z)=f^{+}_{\varepsilon}(z) \ \
\end{equation}
holds for every \(z\in\mathbb{D}\).
\item
The functional family \(\lbrace{}f^{+}_{\varepsilon,\pi/4}\rbrace_{\,0<\varepsilon<\infty}\)
is locally bounded in \(\mathscr{G}^{+}_{\pi/4}\). In other words,
for each compact set \(K\),
\(K\subset\mathscr{G}^{+}_{\pi/4}\), the estimate
\begin{equation}
\label{LUEsP}
\big|f^{+}_{\varepsilon,\pi/4}(z)\big|\leq{}C_K^{+}, \ \ \forall z\in{}K,
\end{equation}
holds, where the value \(C_K^{+}<\infty\) does not depend on \(\varepsilon\).
\item
The limiting relation
\begin{equation}
\label{LiRePl}
\lim_{\varepsilon\to+0}f^{+}_{\varepsilon,\pi/4}(z)=\frac{1}{2(1-z)},
\ \ \forall{}z\in\mathscr{G}^{+}_{\pi/4},
\end{equation}
holds.  In \eqref{LiRePl}, the limit is locally uniform with respect to \(z\in\mathscr{G}^{+}_{\pi/4}\).
\end{enumerate}
\end{lemma}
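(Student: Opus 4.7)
The plan is to take the function $f^{+}_{\varepsilon,\pi/4}$ as already \emph{defined} by the gluing formulas \eqref{Repr}--\eqref{Restr} and then verify the four properties in turn, using Lemma \ref{L2.1} as the main input together with the exhaustion $\{O_n\}$ and Montel's theorem.

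For properties (1) and (2), I would first note that the definition is unambiguous by \eqref{Her}. Holomorphy in $\mathscr{G}^{+}_{\pi/4}$ is a purely local question, and near any $z_{0}\in\mathscr{G}^{+}_{\pi/4}$ we can pick $n$ large enough that $z_{0}\in O_{n}$; on $O_{n}$ our function coincides with $f^{+}_{\varepsilon,\vartheta_{n}}$, which is holomorphic there by Lemma \ref{L2.1}(1) and \eqref{Tetn}. Property (2) is immediate: $\mathbb{D}\subset O_{1}$ (since $\mathbb{D}_{1/2}\subset O_{1}$ and one checks $\mathbb{D}\subset O_{n}$ for all sufficiently large $n$, but in fact \eqref{CoIn} gives $f^{+}_{\varepsilon,\vartheta_{n}}=f^{+}_{\varepsilon}$ on $\mathbb{D}$ for every $n$), so \eqref{AnCoP} follows from \eqref{CoIn}.

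For property (3), given a compact $K\subset\mathscr{G}^{+}_{\pi/4}=\bigcup_{n}O_{n}$, the open cover $\{O_{n}\}$ together with \eqref{Increa} yields some $n_{K}$ with $K\subset O_{n_{K}}\subset\mathscr{G}^{+}_{\vartheta_{n_{K}}}$. On $K$ we then have $f^{+}_{\varepsilon,\pi/4}(z)=f^{+}_{\varepsilon,\vartheta_{n_{K}}}(z)$, and the bound \eqref{LUEs} applied with $\vartheta=\vartheta_{n_{K}}$ gives
\[
|f^{+}_{\varepsilon,\pi/4}(z)|\leq\sqrt{\tfrac{1}{\cos 2\vartheta_{n_{K}}}}\cdot\Bigl(1+\frac{\max_{z\in K}|z|}{\operatorname{dist}(K,\mathscr{S}^{+}_{\vartheta_{n_{K}}})}\Bigr)=:C_{K}^{+},
\]
and $C_{K}^{+}$ depends only on $K$, not on $\varepsilon$.

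Property (4) is then a normal-family argument. By (3) the family $\{f^{+}_{\varepsilon,\pi/4}\}_{\varepsilon>0}$ is locally uniformly bounded in the connected domain $\mathscr{G}^{+}_{\pi/4}$, hence normal by Montel's theorem. On $\mathbb{D}$ it coincides with $f^{+}_{\varepsilon}$, and by \eqref{BLRa} converges to $\tfrac{1}{2(1-z)}$ locally uniformly in $\mathbb{D}$. Any locally uniform limit of a subsequence $f^{+}_{\varepsilon_{k},\pi/4}$ is holomorphic on $\mathscr{G}^{+}_{\pi/4}$ and agrees with $\tfrac{1}{2(1-z)}$ on $\mathbb{D}$; by the identity theorem this limit must be $\tfrac{1}{2(1-z)}$ throughout the connected domain $\mathscr{G}^{+}_{\pi/4}$. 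Since every subsequential limit is the same, the full family converges to $\tfrac{1}{2(1-z)}$ locally uniformly on $\mathscr{G}^{+}_{\pi/4}$, giving \eqref{LiRePl}. The only step requiring care is the one already flagged in the remark preceding the lemma, namely that gluing the functions $f^{+}_{\varepsilon,\vartheta_{n}}$ is legitimate across possibly disconnected overlaps $\mathscr{G}^{+}_{\vartheta_{n_{1}}}\cap\mathscr{G}^{+}_{\vartheta_{n_{2}}}$; here the role of $\mathbb{D}_{1/2}\subset O_{n}$ from \eqref{Comm} and the connectedness of each $O_{n}$ is exactly what makes the identity \eqref{Her} work, and I would emphasize this point explicitly in the write-up.
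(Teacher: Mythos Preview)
Your proposal is correct and follows essentially the same approach as the paper: both rely on the gluing definition \eqref{Repr}--\eqref{Restr}, the exhaustion $\{O_n\}$, and Lemma~\ref{L2.1}. Parts (1)--(3) match the paper's proof almost verbatim (including the explicit constant $C_K^{+}$); the only real difference is in part (4), where the paper transfers the limiting relation \eqref{ELR} directly from $\mathscr{G}^{+}_{\vartheta_{n_0}}$ to $K$ via \eqref{Restr}, whereas you instead invoke Montel's theorem and the identity principle on all of $\mathscr{G}^{+}_{\pi/4}$ --- both arguments are short and valid, and the paper in fact already used your normal-family idea one level down (in deriving \eqref{ELR} itself). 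One small cleanup: your claim ``$\mathbb{D}\subset O_1$'' in part (2) is not guaranteed by the listed properties of the exhaustion, but your fallback argument via \eqref{CoIn} is fine (and is essentially what the paper does, passing through $\mathbb{D}_{1/2}$).
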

\begin{proof}
\textsf{1}. Taking into account \eqref{Restr}, \eqref{Tetn} and the holomorphy
of the function \(f^{+}_{\varepsilon,\vartheta_n}\) on the domain
\(\mathscr{G}^{+}_{\vartheta_n}\), we conclude that the function
\(f^{+}_{\varepsilon,\pi/4}\) is holomorphic on the domain \(O_n\). In view of
\eqref{Repr}, the function \(f^{+}_{\varepsilon,\pi/4}\) is holomorphic on
the domain \(\mathscr{G}^{+}_{\pi/4}\).\\
\textsf{2}. By virtue of \eqref{Restr}, \eqref{Comm} and \eqref{CoIn},
the equality \eqref{AnCoP} holds for every \(z\in\mathbb{D}_{1/2}\).
Since both functions \(f^{+}_{\varepsilon,\pi/4}\) and \(f^{+}_{\varepsilon}\),
are holomorphic on \(\mathbb{D}\), the equality \eqref{AnCoP} holds for every \(z\in\mathbb{D}\).\\
\textsf{3}. Let \(K\) be a compact set, \(K\subset\mathscr{G}^{+}_{\pi/4}\).
In view of \eqref{Repr}, \(K\subset\bigcup\limits_{1\leq{}n<\infty}{O_{n}}\).
In view of \eqref{Increa}, \(K\subset{}O_{n_0}\) for some \(n_0\).
Furthemore, \(K\subset\mathscr{G}^{+}_{\vartheta_{n_0}}\), \eqref{Tetn}.
In particular,
\begin{equation}
\label{PosDi}
\text{dist}(K,\mathscr{S}^{+}_{\vartheta_{n_0}})>0\,,
\end{equation}
where \(\text{dist}(K,\mathscr{S}^{+}_{\vartheta_{n_0}})\) is the distance
from the set \(K\) to the boundary \(\mathscr{S}^{+}_{\vartheta_{n_0}}\) of the
domain \(\mathscr{G}^{+}_{\vartheta_{n_0}}\).
The equality \eqref{Restr} (with \(n=n_0\)) and the estimate \eqref{LUEs}
imply the estimate \eqref{LUEsP} with
\footnotesize
\begin{equation*}
C_{K}=\sqrt{\frac{1}{\cos2\vartheta_{\!n_0}}}
\cdot\left(1+\frac{\max\limits_{\zeta\in{}K}|\zeta|}
{\textup{dist}\,(K,\mathscr{S}^{+}_{\vartheta_{\!n_0}})}\right).
\end{equation*}
\normalsize
\textsf{4}. Let \(K\) be a compact set, \(K\subset\mathscr{G}^{+}_{\pi/4}\).
As we saw, there exists \(n_0\) such that \(K\subset{}O_{n_0}\subset\mathscr{G}^{+}_{\vartheta_{n_0}}\). According to
Lemma \ref{L2.1}, \(\lim\limits_{\varepsilon\to+0}f^{+}_{\varepsilon,\vartheta_{n_0}}(z)=
\dfrac{1}{2(1-z)}\) for each \(z\in{}K\). Moreover, this limiting relation holds uniformly with respect to \(z\in{}K\). In view of \eqref{Restr},
\(f^{+}_{\varepsilon,\pi/4}(z)=f^{+}_{\varepsilon,\vartheta_{n_0}}(z)\) for \(z\in{}K\).
\end{proof}

The same reasoning can be carried out for the function \(f^{-}_{\varepsilon}\).

\begin{lemma}
\label{L23}
For every \(\varepsilon,\,\varepsilon>0\), there exists a function \(f^{-}_{\varepsilon,\,\pi/4}\) such that
\begin{enumerate}
\item
The function \(f^{-}_{\varepsilon,\,\pi/4}\,(\,.\,)\) is holomorphic in the domain \(\mathscr{G}^{-}_{\varepsilon/4}\).
\item
The function \(f^{-}_{\varepsilon,\pi/4}\,(\,.\,)\) is an analytic continuation of the
function \(f^{-}_{\varepsilon}(\,.\,)\), which was defined by \eqref{SpInb}, from the unit disc \(\mathbb{D}\) to the domain \(\mathscr{G}^{-}_{\pi/4}\), i.e. the equality
\begin{equation}
\label{AnCoM}
f^{-}_{\varepsilon,\pi/4}\,(z)=f^{-}_{\varepsilon}(z) \ \
\end{equation}
holds for every \(z\in\mathbb{D}\).
\item
The functional family \(\lbrace{}f^{-}_{\varepsilon,\pi/4}\rbrace_{\,0<\varepsilon<\infty}\)
is locally bounded in \(\mathscr{G}^{-}_{\pi/4}\). In other words,
for each compact set \(K\),
\(K\subset\mathscr{G}^{-}_{\pi/4}\), the estimate
\begin{equation}
\label{LUEsM}
\big|f^{-}_{\varepsilon,\pi/4}(z)\big|\leq{}C_K^{-}, \ \ \forall z\in{}K,
\end{equation}
holds, where the value \(C_K^{-}<\infty\) does not depend on \(\varepsilon\).
\item
The limiting relation
\begin{equation}
\label{LiReMin}
\lim_{\varepsilon\to+0}f^{-}_{\varepsilon,\pi/4}(z)=\frac{1}{2(1-z)},
\ \ \forall{}z\in\mathscr{G}^{-}_{\pi/4},
\end{equation}
holds.  In \eqref{LiReMin}, the limit is locally uniform with respect to \(z\in\mathscr{G}^{-}_{\pi/4}\).
\end{enumerate}
\end{lemma}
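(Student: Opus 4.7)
The plan is to repeat verbatim the reasoning that produced Lemma \ref{L22}, with a clockwise rotation of the contour replacing the counterclockwise one. Starting from \eqref{SpInb}, one rotates the ray of integration by $-\vartheta$ for $0<\vartheta<\pi/4$; this is justified exactly as in \eqref{FaDe}, since the integrand $e^{-\xi^2/4\varepsilon}/(1-ze^{-i\xi})$ is holomorphic in $\xi$ and rapidly decaying in the sector $-\pi/4\leq\arg\xi\leq 0$ for fixed $z\in\mathbb{D}$. The resulting formula \eqref{SpRo-} then serves to define a function $f^{-}_{\varepsilon,\vartheta}$ on $\mathscr{G}^{-}_{\vartheta}$, where the spiral $\mathscr{S}^{-}_{\vartheta}$ is precisely the locus traced out by the relevant exponential in the denominator as $\xi$ runs over $[0,+\infty[$.

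First I would establish the minus-sign analog of Lemma \ref{L2.1}. The identity $\bigl|e^{-\xi^2e^{-2i\vartheta}/4}\bigr|=e^{-\xi^2\cos 2\vartheta/4}$ ensures absolute convergence of the rotated integral, and the distance bound $\bigl|1-ze^{-i\sqrt{\varepsilon}\xi e^{-i\vartheta}}\bigr|^{-1}\leq 1+|z|/\textup{dist}(z,\mathscr{S}^{-}_{\vartheta})$ yields an $\varepsilon$-uniform bound of the type \eqref{LUEs} on compact subsets of $\mathscr{G}^{-}_{\vartheta}$. The Lebesgue dominated convergence theorem (equivalently, normality of the family together with \eqref{BLRb}) then gives the locally uniform limit $\lim_{\varepsilon\to+0}f^{-}_{\varepsilon,\vartheta}(z)=1/(2(1-z))$ on $\mathscr{G}^{-}_{\vartheta}$.

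The second step is the gluing construction. Fix an exhaustion $\lbrace O_n^{-}\rbrace_{1\leq n<\infty}$ of $\mathscr{G}^{-}_{\pi/4}$ by open connected sets with compact closure $\overline{O_n^{-}}\subset\mathscr{G}^{-}_{\pi/4}$, with $O_n^{-}\subseteq O_{n+1}^{-}$, and with $\mathbb{D}_{1/2}\subset O_n^{-}$ for every $n$, exactly as in \eqref{Repr}--\eqref{Comm}. Carath\'eodory kernel convergence $\mathscr{G}^{-}_{\vartheta}\to\mathscr{G}^{-}_{\pi/4}$ as $\vartheta\to\pi/4-0$ provides $\vartheta_n<\pi/4$ with $\overline{O_n^{-}}\subset\mathscr{G}^{-}_{\vartheta_n}$. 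For $n_1<n_2$ the functions $f^{-}_{\varepsilon,\vartheta_{n_1}}$ and $f^{-}_{\varepsilon,\vartheta_{n_2}}$ both coincide with $f^{-}_{\varepsilon}$ on $\mathbb{D}_{1/2}\subset O_{n_1}^{-}\subset O_{n_2}^{-}$, so by the identity principle (the connectedness of each $O_n^{-}$ being essential here) they agree on all of $O_{n_1}^{-}$. The formula $f^{-}_{\varepsilon,\pi/4}(z):=f^{-}_{\varepsilon,\vartheta_n}(z)$ for $z\in O_n^{-}$ is therefore a consistent definition of a function holomorphic on $\mathscr{G}^{-}_{\pi/4}$.

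Conclusions (1)--(4) of the lemma then follow verbatim as in the proof of Lemma \ref{L22}: holomorphy from the local definition, \eqref{AnCoM} from the analog of \eqref{CoIn} on $\mathbb{D}_{1/2}$ extended to $\mathbb{D}$ by the identity theorem, the local bound \eqref{LUEsM} from applying the minus-version of \eqref{LUEs} on $O_{n_0}^{-}\supset K$, and \eqref{LiReMin} from the locally uniform limiting relation already established for $f^{-}_{\varepsilon,\vartheta_{n_0}}$. There is no substantive obstacle; the only matter requiring attention is the sign bookkeeping, namely that the clockwise rotation causes $e^{-i\varepsilon\xi e^{-i\vartheta}}$ to sweep out $\mathscr{S}^{-}_{\vartheta}$ (a spiral winding in the opposite sense to $\mathscr{S}^{+}_{\vartheta}$), and one must select an exhaustion $\lbrace O_n^{-}\rbrace$ adapted to $\mathscr{G}^{-}_{\pi/4}$ rather than the exhaustion \eqref{Repr} used in the plus case.
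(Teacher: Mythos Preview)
Your proposal is correct and follows exactly the approach the paper takes: the paper does not give a separate proof of this lemma, but simply states that ``the same reasoning can be carried out for the function \(f^{-}_{\varepsilon}\)'' and then records the statement. Your write-up is in fact more detailed than what the paper supplies, but the underlying argument---clockwise contour rotation to define \(f^{-}_{\varepsilon,\vartheta}\) on \(\mathscr{G}^{-}_{\vartheta}\), the \(\varepsilon\)-uniform bound analogous to \eqref{LUEs}, and the gluing via an exhaustion of \(\mathscr{G}^{-}_{\pi/4}\)---is identical in spirit and substance.
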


We denote by \(\mathscr{G}\) the connected component of the open set \(\mathscr{G}^{+}_{\pi/4}\bigcap\mathscr{G}^{-}_{\pi/4}\), which contains the origin.
The set \(\mathscr{G}\) admits an explicit description.\\[1.0ex]
\begin{definition}
\label{DeDo}
Let \(\mathscr{C}\) be a closed curve whose parametric representation is
\begin{equation}
\label{HeShCu}
\mathscr{C}=\lbrace{}\zeta\in\mathbb{C}:\,\zeta=e^{|t|+it}\!\!,\, \ \text{where \(t\) runs over} \ [-\pi,+\pi]\,\,\rbrace.
\end{equation}
The domain \(\mathscr{G}\) is the interior of the curve \(\mathscr{C}\).
\end{definition}
\begin{remark}
\label{WTMr}
It is worth mentioning that the domain \(\mathscr{G}\) contains the open unit disc, more precisely\\[-4.0ex]
{\ }\vspace*{-1.0ex}
\begin{equation}
\label{WTM}
\overline{\mathbb{D}}\setminus\{1\}\subset\mathscr{G}\,.
\end{equation}
\end{remark}

{\ }\\
The heart-shaped curve \(\mathscr{C}\) is the outer Jordan curve in Figure 1. It plotted by
a solid blue line. The unit circle \(\mathbb{T}\) is plotted by a solid red line. The curve \(\mathscr{C}\) intersects
the real axis at the points with coordinates \((1,0)\) and (\(-e^{\pi},0)\).
(\(e^{\pi}= 23.140692632779267\ldots\,.\))
The fragments of this
curve near these points are plotted in Figures 2 and 3.

\noindent
\hspace*{1.5ex}
\begin{minipage}{0.26\linewidth}
\begin{center}{\includegraphics*[height=1.2\linewidth,width=1.2\linewidth]{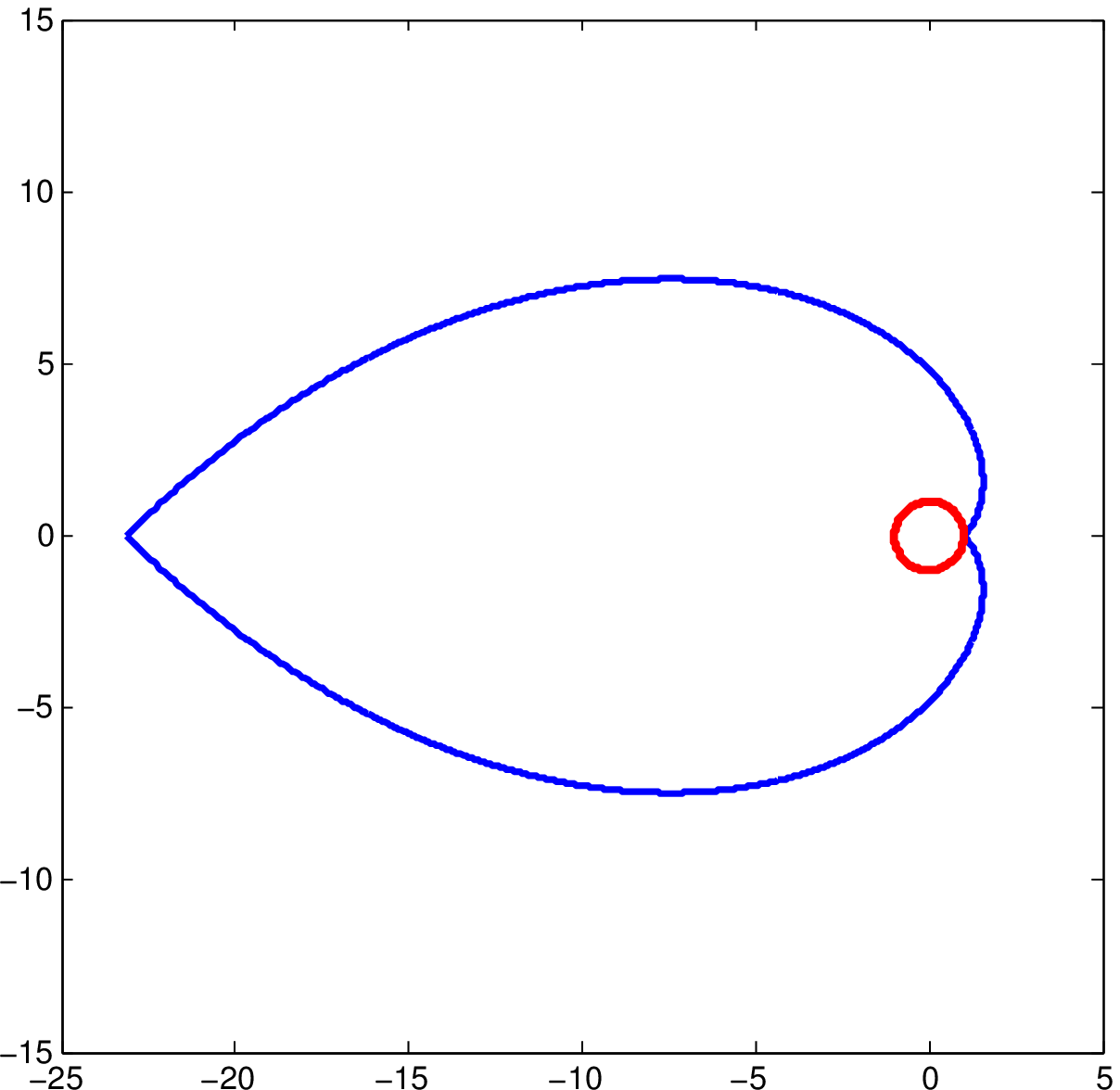}}
\end{center}
\end{minipage}
\hspace*{3.3ex}
\begin{minipage}{0.26\linewidth}
\vspace*{0.9ex}
\begin{center}{\includegraphics*[height=1.2\linewidth,width=1.2\linewidth]{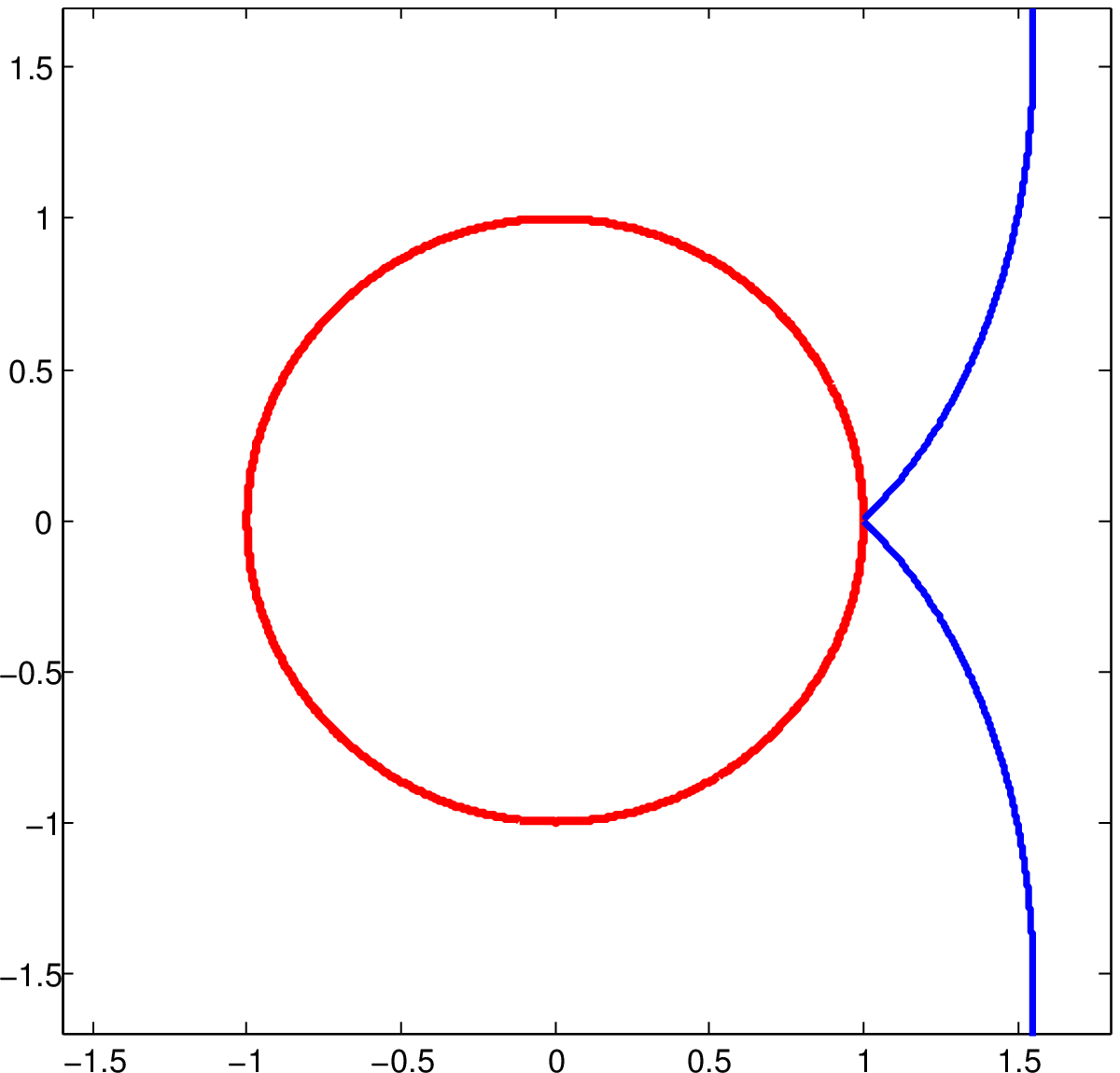}}
\end{center}
\end{minipage}
\hspace*{3.3ex}
\begin{minipage}{0.26\linewidth}
\vspace*{0.80ex}
\begin{center}{\includegraphics*[height=1.2\linewidth,width=1.2\linewidth]{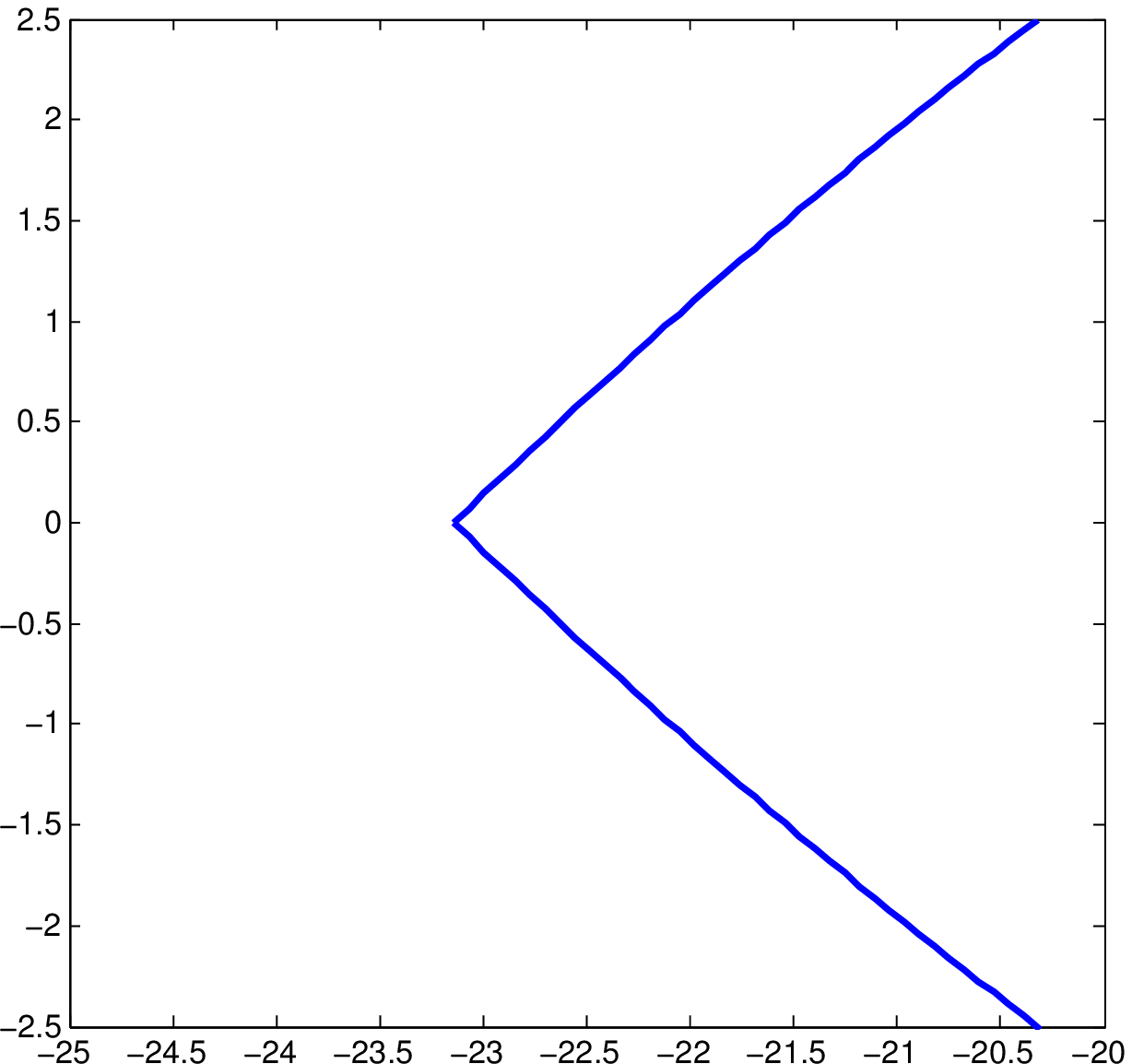}}
\end{center}
\end{minipage}
\vspace{1.5ex}

\centerline{\hspace*{0.0ex} Figure 1 \hspace*{15.0ex}  Figure 2 \hspace*{15.0ex}Figure 3 \hspace*{0.0ex}}

\vspace{2.0ex}
\noindent
In the domain \(\mathscr{G}\), both functions \(f^{+}_{\varepsilon,\pi/4}\) and \(f^{-}_{\varepsilon,\pi/4}\)
are defined and holomorphic. Hence, the sum
\begin{math}
f^{+}_{\varepsilon,\pi/4}(z)+f^{-}_{\varepsilon,\pi/4}(z)
\end{math}
of these functions is defined and holomorphic on \(\mathscr{G}\). According to \eqref{AnCoP}, \eqref{AnCoM}, and \eqref{SpI},
\begin{equation*}
f^{+}_{\varepsilon,\pi/4}(z)+f^{-}_{\varepsilon,\pi/4}(z)=f_{\varepsilon}(z),\ \forall\,z\in\mathbb{D}\,,
\end{equation*}
where \(f_{\varepsilon}\) is  defined by \eqref{TeSRb}. The function \(f^{+}_{\varepsilon,\pi/4}+f^{-}_{\varepsilon,\pi/4}\)
is holomorphic in \(\mathscr{G}\), the function \(f_{\varepsilon}(z)\) is an entire function.
Therefore
\begin{equation}
\label{MaCo}
f^{+}_{\varepsilon,\pi/4}(z)+f^{-}_{\varepsilon,\pi/4}(z)=f_{\varepsilon}(z),\ \forall\,z\in\mathscr{G}\,.
\end{equation}
The following statement follows from the last equality and from the properties\footnote{\,These properties were summarized in Lemmas \ref{L22}, \ref{L23}.} of the functional families
\(\lbrace{}f^{\pm}_{\varepsilon,\pi/4}\rbrace_{0<\varepsilon<\infty}\)\,.
\begin{theorem}
\label{ConTh}
For \(\varepsilon>0\), let \(f_{\varepsilon}(z)\) be the function which is defined as the
sum of the power series \eqref{TeSRb}. Let \(\mathscr{G}\) be the domain
 introduced in Definition \ref{DeDo}. \\
 Then
\begin{enumerate}
\item
For each \(\varepsilon>0\), the function \(f_{\varepsilon}\) is an entire function of order zero.
\item
The functional family \(\lbrace{}f_{\varepsilon}\rbrace\) is locally bounded in \(\mathscr{G}\).
This means that for every compact subset \(K\) of the domain \(\mathscr{G}\),
 \(K\Subset{}G,\) the inequality
 \begin{equation}
 \label{UnBo}
 |f_{\varepsilon}(z)|\leq{}C_K, \ \ \forall z\in{}K\,,\ \ \forall\varepsilon>0\,,
 \end{equation}
 holds, where \(C_K<\infty\) is a constant which does not depend on \(z\) and \(\varepsilon\).
 \item
 The limiting relation holds
 \begin{equation}
 \label{CoSe}
 \lim_{\varepsilon\to+0}f_{\varepsilon}(z)=\frac{1}{1-z}\,, \ \ \text{for each} \ \ z\in\mathscr{G}.
 \end{equation}
 The convergence \(f_{\varepsilon}(z)\) to \(\frac{1}{1-z}\) is locally uniform with respect to \(z\in\mathscr{G}\).
\end{enumerate}
\end{theorem}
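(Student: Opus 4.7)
The plan is to deduce the theorem almost directly from the three preparatory results Lemma \ref{L11}, Lemma \ref{L22}, and Lemma \ref{L23}, combined with the identity \eqref{MaCo}, which already provides the key bridge from the entire function $f_\varepsilon$ to the decomposition $f^{+}_{\varepsilon,\pi/4}+f^{-}_{\varepsilon,\pi/4}$ on the domain $\mathscr{G}$.

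\textbf{Step 1 (item 1).} The claim that each $f_\varepsilon$ is entire of order zero is just a restatement of Lemma \ref{L11}, so I would cite it and move on.

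\textbf{Step 2 (item 2).} Fix a compact set $K\Subset\mathscr{G}$. Since, by definition, $\mathscr{G}$ is the connected component of $\mathscr{G}^{+}_{\pi/4}\cap\mathscr{G}^{-}_{\pi/4}$ containing the origin, we have $K\subset\mathscr{G}^{+}_{\pi/4}$ and $K\subset\mathscr{G}^{-}_{\pi/4}$. By Lemma \ref{L22}, item 3, there is a constant $C_K^{+}<\infty$, independent of $\varepsilon$, such that $|f^{+}_{\varepsilon,\pi/4}(z)|\leq C_K^{+}$ for all $z\in K$ and all $\varepsilon>0$; likewise by Lemma \ref{L23}, item 3, there is $C_K^{-}<\infty$ with $|f^{-}_{\varepsilon,\pi/4}(z)|\leq C_K^{-}$ uniformly on $K$. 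Now invoke \eqref{MaCo}: for every $z\in\mathscr{G}$ one has $f_\varepsilon(z)=f^{+}_{\varepsilon,\pi/4}(z)+f^{-}_{\varepsilon,\pi/4}(z)$. Adding the two bounds yields \eqref{UnBo} with $C_K=C_K^{+}+C_K^{-}$.

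\textbf{Step 3 (item 3).} Again fix $K\Subset\mathscr{G}$. By Lemma \ref{L22}, item 4, $f^{+}_{\varepsilon,\pi/4}(z)\to\tfrac{1}{2(1-z)}$ uniformly on $K$ as $\varepsilon\to+0$, and by Lemma \ref{L23}, item 4, $f^{-}_{\varepsilon,\pi/4}(z)\to\tfrac{1}{2(1-z)}$ uniformly on $K$. Using \eqref{MaCo} once more gives
\begin{equation*}
\lim_{\varepsilon\to+0}f_{\varepsilon}(z)=\frac{1}{2(1-z)}+\frac{1}{2(1-z)}=\frac{1}{1-z}
\end{equation*}
uniformly for $z\in K$, proving \eqref{CoSe} locally uniformly on $\mathscr{G}$.

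The only nontrivial point to verify carefully is that \eqref{MaCo} really holds on all of $\mathscr{G}$, not only on $\mathbb{D}$. This is the single place where the connectedness of $\mathscr{G}$ is essential: the identity is true on $\mathbb{D}$ by the construction leading to \eqref{SpI} and by \eqref{AnCoP}, \eqref{AnCoM}; the functions on both sides are holomorphic on the connected open set $\mathscr{G}$ (the left-hand side by Lemmas \ref{L22}, \ref{L23}; the right-hand side as an entire function), so they must agree throughout $\mathscr{G}$ by the identity principle. This observation is already recorded as \eqref{MaCo} in the text preceding the theorem, so I would simply refer to it. With that in hand, items 2 and 3 follow by the additions carried out in Steps 2 and 3, and the proof is complete.
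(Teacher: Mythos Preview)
Your proof is correct and follows essentially the same route as the paper: the paper derives the theorem immediately from the identity \eqref{MaCo} together with Lemmas \ref{L11}, \ref{L22}, and \ref{L23}, exactly as you do. Your write-up merely makes explicit the additions $C_K=C_K^{+}+C_K^{-}$ and $\tfrac{1}{2(1-z)}+\tfrac{1}{2(1-z)}$ that the paper leaves implicit.
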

\begin{remark}
\label{Pri}
The proof of \textup{Theorem \ref{ConTh}} is close to the proof of the Theorem of Le Roy and
Lindel\"of, see \cite[page 340]{Die}.
Using the method of Le Roy and Lindel\"of, D.\,Khavinson gave a short proof of the following result of Tao Qian.

Let \(f(z)=\sum\limits_{1\leq n<\infty}g(n)z^n\), where \(g(\zeta)\) is a function           holomorphic and bounded in the sector \(S_{\psi}=\{z:|\textup{arg}\,z|<\psi\}\),
where \(0<\psi\leq\pi/2\). Then the function \(f(z)\), which is, of course, analytic
in the unit disc \(\mathbb{D}\), extends in the heart-shaped domain
\(G_{\psi}:=\{z=re^{i\vartheta}:2\pi-\cos\psi\,\ln{}r>\vartheta>\cos\psi\,\ln{}r\}\).

If \(\psi=\pi/4\), then \(G_{\psi}=\mathscr{G}\), where the domain \(\mathscr{G}\) was
introduced in Definition~\ref{DeDo}. For each \(\varepsilon>0\), the entire function
\(g_{\varepsilon}(\zeta):=e^{-\varepsilon\zeta^2}\) is bounded in the sector \(S_{\pi/4}\).
From the reasoning of the paper \cite{Kh} it follows that the family of functions \(f_{\varepsilon}(z)=\sum\limits_{1\leq n<\infty}g_{\varepsilon}(n)z^n\) is locally
bounded in \(\mathscr{G}\) uniformly with respect to \(\varepsilon>0\). Our Theorem~\ref{ConTh} can be derived from this.
\end{remark}
\begin{remark}
\label{Fru}
The paper \cite{Fr} also is related to the paper \cite{Kh} and to our Theorem~\ref{ConTh}.
See Theorem 3.8 of \cite{Fr}. This Theorem formally contains our Theorem \ref{ConTh}
and in fact is based on the method of Le Roy and Lindel\"of. In \cite{Fr}, the terminology
of the non-standard analysis is used. In \cite{Fr}, the exposition of Theorem 3.8 is not quite clean,
but it can be cleaned. Theorem 3.9 in \cite{Fr} is wrong.
\end{remark}
\section{Theta summation method. Divergence.}
\label{Sec3}
\setcounter{equation}{0}
\begin{theorem}\hspace*{1.5ex}
\label{DivTh}
Let the function \(f_{\varepsilon}\) and the domain \(\mathscr{G}\) be the same that in Theorem \ref{ConTh}.

 Then for every \(z\not\in\mathscr{G}\) the limiting relation
\begin{equation}
\label{DivimRel}
\varlimsup_{\varepsilon\to+0}|f_{\varepsilon}(z)|=\infty
 \end{equation}
 holds.
\end{theorem}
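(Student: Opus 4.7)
The plan is to bypass the contour-deformation machinery of Section \ref{Sec2} and instead exploit the modular duality of the Gaussian $e^{-\varepsilon n^{2}}$ via Poisson summation. The bilateral series $\sum_{n\in\mathbb{Z}}e^{-\varepsilon n^{2}}z^{n}$ converges absolutely for every $z\in\mathbb{C}\setminus\{0\}$ because $e^{-\varepsilon n^{2}}$ decays faster than any geometric rate, and Poisson summation applied to $n\mapsto e^{-\varepsilon n^{2}+n\log z}$ yields the theta transformation identity
\[
f_{\varepsilon}(z)+f_{\varepsilon}(1/z)-1=\sqrt{\tfrac{\pi}{\varepsilon}}\sum_{k\in\mathbb{Z}}\exp\!\Bigl(-\frac{\pi^{2}}{\varepsilon}\Bigl(k-\frac{i\log z}{2\pi}\Bigr)^{\!2}\Bigr),
\]
valid for all $z\neq 0$ and independent of the chosen branch of $\log z$ (a shift $\log z\mapsto\log z+2\pi i$ reindexes the $k$-sum). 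The virtue of this identity is that the modulus of the dominant summand on the right is governed by $\operatorname{Re}\!\bigl((\log z)^{2}\bigr)=(\log|z|)^{2}-(\arg z)^{2}$, which is exactly the quantity whose sign changes across the heart curve $\mathscr{C}$.

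Next, Remark \ref{WTMr} forces any $z\notin\mathscr{G}$ with $z\neq 1$ to satisfy $|z|>1$, so $1/z\in\mathbb{D}\subset\mathscr{G}$, and Theorem \ref{ConTh} ensures that $\{f_{\varepsilon}(1/z)\}_{\varepsilon>0}$ is bounded (and in fact tends to $1/(1-1/z)$). Establishing \eqref{DivimRel} therefore reduces to showing that the right-hand theta sum has unbounded modulus as $\varepsilon\to+0$. Writing $z=re^{i\theta}$ with principal argument $\theta\in(-\pi,\pi]$, a direct expansion shows the $k$-th summand has modulus $\exp\!\bigl([(\log r)^{2}-(2\pi k+\theta)^{2}]/4\varepsilon\bigr)$; the hypothesis $z\notin\mathscr{G}$ translates exactly to $\log r\geq|\theta|$, so the $k=0$ term already has modulus at least $1$.

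The analysis then splits into three regimes. If $z$ lies strictly outside $\overline{\mathscr{G}}$, so $\log r>|\theta|$, the $k=0$ term grows like $\exp\bigl([(\log r)^{2}-\theta^{2}]/4\varepsilon\bigr)$ and strictly dominates every other summand, giving exponential divergence after multiplication by the $\sqrt{\pi/\varepsilon}$ prefactor. If $z\in\mathscr{C}\setminus\{1,-e^{\pi}\}$, so $\log r=|\theta|\in(0,\pi)$, then every $k\neq 0$ summand has modulus $\exp(-\pi k(\pi k+\theta)/\varepsilon)$, which a sign check shows is exponentially small, while the $k=0$ term has modulus exactly $1$; the theta sum thus has magnitude asymptotic to $\sqrt{\pi/\varepsilon}\to\infty$. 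The remaining points are $z=1$, handled directly since $f_{\varepsilon}(1)=\sum_{n}e^{-\varepsilon n^{2}}\sim\tfrac{1}{2}\sqrt{\pi/\varepsilon}$, and the cusp $z=-e^{\pi}$, where both $k=0$ and $k=-1$ contribute unit-modulus terms summing to $2\cos(\pi^{2}/2\varepsilon)$.

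The main obstacle is the behavior on $\mathscr{C}$, where the exponential factor has modulus exactly $1$ and only the $\sqrt{\pi/\varepsilon}$ prefactor supplies growth; here one must verify that the oscillating phase of the $k=0$ term cannot accidentally cancel the bounded contribution $f_{\varepsilon}(1/z)-1$, which is automatic because the prefactor $\sqrt{\pi/\varepsilon}$ eventually dominates any fixed bound. At the cusp the combined surviving contribution is $2\cos(\pi^{2}/2\varepsilon)$, and although this oscillates and vanishes at a discrete set of $\varepsilon$, its $\varlimsup$ equals $2$ as $\varepsilon\to+0$, which with the $\sqrt{\pi/\varepsilon}$ prefactor forces divergence along a suitable subsequence. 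This is precisely where the use of $\varlimsup$ (rather than $\lim$) in the statement becomes indispensable.
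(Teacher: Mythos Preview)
Your approach is essentially the paper's own proof: both bilateralize the series, apply the Jacobi/Poisson identity to rewrite $h_\varepsilon(z)=f_\varepsilon(z)+f_\varepsilon(1/z)-1$ as the Gaussian series $\sqrt{\pi/\varepsilon}\sum_{k}\exp\bigl(-(\zeta-k\pi)^2/\varepsilon\bigr)$ with $\zeta=\tfrac{1}{2i}\log z$, observe that the negative-index tail (equivalently $f_\varepsilon(1/z)$) stays bounded on $\mathbb{D}^-$, and then show that the transformed sum has $\varlimsup=\infty$ precisely when $z\notin\mathscr{G}$ by isolating the dominant summands. The paper packages that last step abstractly: it forms the set $Z_1(\zeta)=\{n:\operatorname{Re}(\zeta-n\pi)^2\le 0\}$, notes that $z\notin\mathscr{G}$ is equivalent to $Z_1\neq\emptyset$, and invokes a short lemma (Lemma~\ref{EL}) on exponential sums $\sum n_k e^{\lambda_k\tau}$ with $\operatorname{Re}\lambda_k\ge 0$, using almost-periodicity to handle the situation where several $\lambda_k$ share the maximal real part. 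Your version replaces that lemma with an explicit three-case breakdown, which is more concrete but covers the same ground.

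There is one small gap in your case analysis. In your Case~1 you assert that the $k=0$ term \emph{strictly} dominates every other summand whenever $\log r>|\theta|$, but this fails on the negative real axis: if $\theta=\pi$ then $|2\pi(-1)+\theta|=\pi=|\theta|$, so the $k=0$ and $k=-1$ terms have identical modulus. For $z=-r$ with $r>e^{\pi}$ these two terms are complex conjugates and their sum is $2e^{c/\varepsilon}\cos\!\bigl(\tfrac{\pi\log r}{2\varepsilon}\bigr)$ with $c=\tfrac14\bigl((\log r)^2-\pi^2\bigr)>0$; the remaining terms are exponentially smaller, and the same $\varlimsup$ argument you give at the cusp $z=-e^{\pi}$ disposes of this sub-case. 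This is exactly the tie-breaking phenomenon that the paper's almost-periodic Lemma~\ref{EL} is designed to absorb uniformly; once you patch this sub-case your argument is complete and coincides with the paper's.
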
{\ }\\

\noindent
It is clear that
\begin{equation}
\label{EC}
\lim_{\varepsilon\to+0}\sum\limits_{-\infty<n<0}e^{-\varepsilon{}n^2}z^n=
\sum\limits_{-\infty<n<0}z^n=\frac{z}{z-1}\,\ccomma \ \ \forall{}z\in\mathbb{D}^{-}.
\end{equation}
\label{CA}
For \(\varepsilon>0\) and \(z\in\mathbb{C}\setminus\{0\}\), let
\begin{equation}
\label{AuH}
h_{\varepsilon}(z)=\sum\limits_{-\infty<n<\infty}e^{-\varepsilon{}n^2}z^n
\end{equation}
For each \(\varepsilon>0\), the function \(h_{\varepsilon}(\,.\,)\) is holomorphic in the domain
\(\mathbb{C}\setminus\{0\}\) and satisfies the relation
\begin{equation}
\label{SymRel}
h_{\varepsilon}(z)=h_{\varepsilon}(z^{-1})\,.
\end{equation}

\begin{definition}
\label{DivvS}
Let
\begin{subequations}
\label{Divv}
\begin{align}
\label{Divva}
\mathscr{V}_{f}=\{z\in\mathbb{C}:\,\varlimsup_{\varepsilon\to+0}|f_{\varepsilon}(z)|&=\infty\},\\
\label{Divvb}
\mathscr{V}_{h}=\{z\in\mathbb{C}:\,\varlimsup_{\varepsilon\to+0}|h_{\varepsilon}(z)|&=\infty\},
\end{align}
\end{subequations}
The sets \(\mathscr{V}_{f}\) and \(\mathscr{V}_{h}\) are said to be \emph{the divergence set for the functional family
\(\{f_{\varepsilon}\}_{\varepsilon>0}\)} and \emph{the divergence set for the functional family
\(\{h_{\varepsilon}\}_{\varepsilon>0}\)} respectively.\\
\end{definition}

\begin{remark}
\label{EqFo}
Theorem \ref{DivTh} is equivalent to the relation
\begin{equation}
\label{EqRel}
\mathbb{C}\setminus\mathscr{G}=\mathscr{V}_{f}\,.
\end{equation}
\end{remark}

\begin{remark}
\label{Inva}
In view of \eqref{SymRel}, the set \(\mathscr{V}_{h}\) is invariant with respect to the transformation
\(z\to{}z^{-1}\): \, \(z\in\mathscr{V}_{h}\) if and only \(z^{-1}\in\mathscr{V}_{h}\).\\
\end{remark}

\noindent
In view of  \eqref{CoSe} and \eqref{WTM},
\begin{equation}
\label{Incl}
\mathscr{V}_{f}\subset\{1\}\cup\mathbb{D}^{-}\,.
\end{equation}
It is clear that
\begin{equation}
\label{Int}
\{1\}\subset\mathscr{V}_{f}\cap\mathscr{V}_{h}\,.
\end{equation}

\noindent
Because of \eqref{Incl}, \eqref{Int}, the equality
\begin{equation*}
h_{\varepsilon}(z)=f_{\varepsilon}(z)+
\sum\limits_{-\infty<n<0}e^{-\varepsilon{}n^2}z^n,\ \ \forall\,z\in\mathbb{C},\ \forall\,\varepsilon>0\,,
\end{equation*}
and \eqref{EC}, the following statement is evident.\\

\begin{lemma}
\label{L3.1}
For the divergence sets \(\mathscr{V}_{f}\) and \(\mathscr{V}_{h}\), the following relation holds:
\begin{equation}
\label{SeDe}
\mathscr{V}_{f}=\{1\}\cup\big(\mathscr{V}_{h}\cap\mathbb{D}^{-}\big)\,.
\end{equation}
\end{lemma}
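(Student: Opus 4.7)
The plan is to prove both inclusions in \eqref{SeDe} using the decomposition
\[
h_{\varepsilon}(z)=f_{\varepsilon}(z)+g_{\varepsilon}(z), \qquad g_{\varepsilon}(z):=\sum_{-\infty<n<0}e^{-\varepsilon n^{2}}z^{n},
\]
together with the ingredients already assembled just above the lemma: namely the inclusion \eqref{Incl} that localizes $\mathscr{V}_{f}$ inside $\{1\}\cup\mathbb{D}^{-}$, the fact that $\{1\}\subset\mathscr{V}_{f}$ from \eqref{Int}, and the convergence \eqref{EC} which shows that on $\mathbb{D}^{-}$ the auxiliary tail $g_{\varepsilon}(z)$ converges to the finite value $z/(z-1)$ as $\varepsilon\to+0$, and is therefore bounded in $\varepsilon$ for each fixed $z\in\mathbb{D}^{-}$.

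For the inclusion $\{1\}\cup\bigl(\mathscr{V}_{h}\cap\mathbb{D}^{-}\bigr)\subset\mathscr{V}_{f}$, I would argue pointwise. The point $z=1$ lies in $\mathscr{V}_{f}$ by \eqref{Int} (equivalently, $f_{\varepsilon}(1)=\sum_{n\geq 0}e^{-\varepsilon n^{2}}\to\infty$ as $\varepsilon\to+0$ by monotone convergence). For $z\in\mathscr{V}_{h}\cap\mathbb{D}^{-}$, the decomposition gives $f_{\varepsilon}(z)=h_{\varepsilon}(z)-g_{\varepsilon}(z)$; by \eqref{EC} the quantity $g_{\varepsilon}(z)$ stays bounded as $\varepsilon\to+0$, so the blow-up of $h_{\varepsilon}(z)$ forces the blow-up of $f_{\varepsilon}(z)$, whence $z\in\mathscr{V}_{f}$.

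For the reverse inclusion $\mathscr{V}_{f}\subset\{1\}\cup\bigl(\mathscr{V}_{h}\cap\mathbb{D}^{-}\bigr)$, take $z\in\mathscr{V}_{f}$. By \eqref{Incl}, either $z=1$ (and we are done) or $z\in\mathbb{D}^{-}$. In the second case, $h_{\varepsilon}(z)=f_{\varepsilon}(z)+g_{\varepsilon}(z)$; again $g_{\varepsilon}(z)$ is bounded in $\varepsilon$ by \eqref{EC}, so $\varlimsup_{\varepsilon\to+0}|h_{\varepsilon}(z)|=\infty$, placing $z$ in $\mathscr{V}_{h}\cap\mathbb{D}^{-}$.

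There is no substantial obstacle here: once the decomposition of $h_{\varepsilon}$ into $f_{\varepsilon}$ plus the negative-index tail is in hand, and once one knows the tail is bounded in $\varepsilon$ on $\mathbb{D}^{-}$ by \eqref{EC}, the proof is a matter of bookkeeping. The only points worth emphasizing explicitly are (i) that $g_{\varepsilon}(z)$ is well defined for every $z\neq 0$ (it is an entire function of $1/z$ dampened by the Gaussian weight), so the splitting is legitimate wherever both $\mathscr{V}_{f}$ and $\mathscr{V}_{h}$ are defined, and (ii) that the case $z=1$ has to be handled separately because $1\notin\mathbb{D}^{-}$ and \eqref{EC} is not available there.
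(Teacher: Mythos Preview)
Your proof is correct and follows exactly the approach the paper takes: the paper declares the lemma ``evident'' from \eqref{Incl}, \eqref{Int}, the decomposition $h_{\varepsilon}=f_{\varepsilon}+\sum_{n<0}e^{-\varepsilon n^{2}}z^{n}$, and \eqref{EC}, and you have simply written out the two inclusions that this evidence entails. There is nothing to add.
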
{\ }\\

\noindent
To find the divergence set \(\mathscr{V}_{h}\), we use the equality
\begin{equation}
\label{PuTr}
h_{\varepsilon}(z)=H_{\varepsilon}\big(\tfrac{\ln{}z}{2i}\big), \ \ \forall{}z\in\mathbb{C}\setminus\{0\}
\end{equation}
where
\begin{equation}
\label{PuTrF}
H_{\varepsilon}(\zeta)=\sqrt{\tfrac{\pi}{\varepsilon}}\!\!\!\!
\sum\limits_{-\infty<n<\infty}\!\!\text{\large\(e\)}^{-(\zeta-n\pi)^{2}/\varepsilon},\ \
\varepsilon>0,\ \, \zeta\in\mathbb{C}\,.
\end{equation}
(The converges of the series on the right hand side of \eqref{PuTrF} is explained in Corollary
\ref{CorCon} below.)
The equality \eqref{PuTr}, where the functions \,\(h_{\varepsilon}(\,.\,)\) \,and \, \(H_{\varepsilon}(\,.\,)\) \,are defined by \eqref{AuH} and \eqref{PuTrF}, is a particular case
of the Jacobi transformation for the theta function \(\vartheta_3(z|\tau)\). (See
\cite[Chapter XXI, sect.\,21.51]{WhWa}, the formula before Example 1 there.)
However the equality \eqref{PuTr} can be obtained without any reference to the theory
of theta functions, but by using the Poisson summation formula. (Concerning the Poisson summation
formula we refer to \cite[Chapter 2, sect.\,7.5]{DyMcK}, pages 111-112.)

The function \(H_{\varepsilon}(\zeta)\) is a periodic function with respect to \(\zeta\) with a period \(\pi\)
and also is an even function:
\begin{equation}
\label{SySeH}
H_{\varepsilon}(\zeta+\pi)\equiv{}H_{\varepsilon}(\zeta)\,, \ \
H_{\varepsilon}(\zeta)\equiv{}H_{\varepsilon}(-\zeta)\,.
\end{equation}
Therefore the function \(H_{\varepsilon}\big(\tfrac{\ln{}z}{2i})\) is a single-valued function of \(z\).
In particular, this function does not depend on a choice of branch of \(\ln z\).

\begin{definition}
\label{DHS}
Let
\begin{equation}
\label{DivHDom}
\mathscr{V}_{H}=
\lbrace{}\zeta\in\mathbb{C}:\,\varlimsup_{\varepsilon\to0}|H_{\varepsilon}(\zeta)|=\infty\rbrace\,.
\end{equation}
The set \(\mathscr{V}_{H}\) is said to be \emph{the divergence set for the functional family
\(\{H_{\varepsilon}\}_{\varepsilon>0}\)}.
\end{definition}{\ }\\

\noindent
In view of \eqref{SySeH}, the set \(\mathscr{V}_{H}\) possesses the properties
\begin{equation}
\label{PrH}
\mathscr{V}_{H}+k\pi=\mathscr{V}_{H},\ \forall\ k\in\mathbb{Z},\ \ \ \ -\mathscr{V}_{H}=\mathscr{V}_{H}\,.
\end{equation}
This means that if \(\zeta\in\mathscr{H}\), then \((\zeta+k\pi)\in\mathscr{H}\) for any \(k\in\mathbb{Z}\)
and \((-\zeta)\in\mathscr{H}\).\\

\noindent
The equality \eqref{PuTr} for the functions \(h_{\varepsilon}(\,.\,)\)  and \(H_{\varepsilon}(\,.\,)\) implies the following statement:
\begin{lemma}
For the divergence sets \(\mathscr{V}_{f}\) and \(\mathscr{V}_{h}\), the following relation%
\footnote{\,The relation \eqref{ESRe} means that \(\zeta\in\mathscr{V}_{H}\) if and only if
\(z=e^{2i\zeta}\in\mathscr{V}_{h}\).}
 holds:
\begin{equation}
\label{ESRe}
\mathscr{V}_{h}=\exp\{2i\mathscr{V}_{H}\}
\end{equation}
\end{lemma}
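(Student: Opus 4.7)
The approach is direct: exploit the key identity \eqref{PuTr}, which expresses $h_\varepsilon(z) = H_\varepsilon\bigl(\tfrac{\ln z}{2i}\bigr)$, so that the divergence behavior of $h_\varepsilon$ at $z \in \mathbb{C}\setminus\{0\}$ matches that of $H_\varepsilon$ at any $\zeta$ with $e^{2i\zeta} = z$. My plan is to prove both inclusions in \eqref{ESRe} by applying this identity in each direction.

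For $\exp\{2i\mathscr{V}_H\} \subset \mathscr{V}_h$, I would take $\zeta \in \mathscr{V}_H$, set $z := e^{2i\zeta}$, observe that $z \neq 0$, and use \eqref{PuTr} with the branch of $\ln z$ equal to $2i\zeta$ to conclude $|h_\varepsilon(z)| = |H_\varepsilon(\zeta)|$ for every $\varepsilon > 0$. Taking $\varlimsup_{\varepsilon\to +0}$ on both sides then gives $z \in \mathscr{V}_h$. For the reverse inclusion, I would fix $z \in \mathscr{V}_h$ (so in particular $z \neq 0$), select any $\zeta \in \mathbb{C}$ satisfying $e^{2i\zeta} = z$, and apply \eqref{PuTr} in the form $|H_\varepsilon(\zeta)| = |h_\varepsilon(z)|$. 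Taking $\varlimsup_{\varepsilon\to +0}$ yields $\zeta \in \mathscr{V}_H$, whence $z = e^{2i\zeta} \in \exp\{2i\mathscr{V}_H\}$.

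The only subtlety worth spelling out is the multi-valuedness of $\tfrac{\ln z}{2i}$ in \eqref{PuTr}: two preimages $\zeta',\zeta''$ of the same $z$ under the covering map $\zeta \mapsto e^{2i\zeta}$ differ by an element of $\pi\mathbb{Z}$, so by the periodicity $H_\varepsilon(\zeta+\pi) \equiv H_\varepsilon(\zeta)$ recorded in \eqref{SySeH} we have $H_\varepsilon(\zeta') = H_\varepsilon(\zeta'')$. Hence the right-hand side of \eqref{PuTr} is unambiguous, and the two directions above are automatically consistent with each other. This same periodicity reappears, as it should, in \eqref{PrH}: the set $\mathscr{V}_H$ is invariant under translation by $\pi\mathbb{Z}$, mirroring the fiber structure of the exponential covering. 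There is no genuine obstacle in the argument; once \eqref{PuTr} is in hand (whose derivation via the Poisson summation formula is already indicated in the text), the lemma is essentially a repackaging of that identity in the language of divergence sets.
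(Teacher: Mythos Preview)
Your proposal is correct and follows exactly the approach indicated in the paper: the paper does not give a separate proof but simply states that the identity \eqref{PuTr} implies the lemma, and you have spelled out precisely why that implication holds, including the resolution of the branch ambiguity via the periodicity \eqref{SySeH}. Your write-up is in fact more detailed than the paper's one-line justification, but the underlying argument is identical.
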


Let us study the series on the right hand side of \eqref{PuTrF}.
\begin{lemma}
\label{StSer}
Let \(K\) be a compact subset of the complex plane \(\mathbb{C}\).
There exists a number \(N=N(K),\ 1\leq{}N(K)<\infty\), which depends only on \(K\) but not on \(\zeta\) such that
the inequality
\begin{equation}
\label{LiIn}
\textup{Re}\,(\zeta-n\pi)^{2}\geq{}(n\pi)^2/2
\end{equation}
holds for every \(\zeta\in{}K\) and for every \(n\in\mathbb{Z}\) such that \(|n|>N(K)\).
\end{lemma}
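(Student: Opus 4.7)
The plan is to reduce the claim to an elementary real-variable calculation by writing $\zeta=x+iy$ and exploiting the fact that, on a compact set $K$, both $x$ and $y$ are bounded in absolute value by some constant $M=M(K)$.

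First, I would compute the real part directly. For $\zeta=x+iy$ and $n\in\mathbb{Z}$,
\begin{equation*}
(\zeta-n\pi)^{2}=(x-n\pi)^{2}-y^{2}+2i(x-n\pi)y,
\end{equation*}
so $\operatorname{Re}(\zeta-n\pi)^{2}=(x-n\pi)^{2}-y^{2}$. Setting $M=\max_{\zeta\in K}\max\bigl(|\operatorname{Re}\zeta|,|\operatorname{Im}\zeta|\bigr)$, which is finite by compactness, gives $|x|\le M$ and $|y|\le M$, and hence for $|n|\pi\ge M$
\begin{equation*}
(x-n\pi)^{2}-y^{2}\ge(|n|\pi-M)^{2}-M^{2}=(n\pi)^{2}-2M|n|\pi.
\end{equation*}

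Next I would impose the condition that makes the right-hand side at least $(n\pi)^{2}/2$. This requires $(n\pi)^{2}/2\ge 2M|n|\pi$, equivalently $|n|\pi\ge 4M$. Thus taking $N(K)$ to be any integer with $N(K)\ge 4M(K)/\pi$ (for instance $N(K)=\lceil 4M(K)/\pi\rceil$) yields the desired estimate for all $\zeta\in K$ and all $|n|>N(K)$, which is independent of $\zeta$ as required.

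There is no real obstacle: the proof is a straightforward compactness argument followed by an elementary estimate, and no structural insight beyond bounding $|\operatorname{Re}\zeta|$ and $|\operatorname{Im}\zeta|$ uniformly on $K$ is needed. The only minor care is to phrase the bound on $M$ so that both the linear term $-2M|n|\pi$ and the $-y^{2}$ term are controlled by the same constant, which is automatic from the choice of $M$ above.
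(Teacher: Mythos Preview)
Your proof is correct and essentially the same as the paper's: both use compactness of $K$ to bound $\zeta$ uniformly and then do an elementary estimate, arriving at the identical threshold $N(K)\approx 4M/\pi$. The only stylistic difference is that the paper factors $(\zeta-n\pi)^2=(n\pi)^2(1-\zeta/(n\pi))^2$ and invokes the complex inequality $\textup{Re}\,(1-a)^2\ge(1-|a|)^2$ for $|a|\le\tfrac12$, whereas you separate real and imaginary parts from the outset and work entirely with real quantities; your route is marginally more direct since it avoids verifying that auxiliary complex inequality.
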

\begin{proof}\!Since \((\zeta-n\pi)^{2}=(n\pi)^2\big(1-\zeta/(n\pi)\big)^2\!\!,\) the equality
holds
\begin{equation*}
\textup{Re}\,(\zeta-n\pi)^2=(n\pi)^2\cdot\textup{Re}\,(1-~\zeta/(n\pi))^2\,.
\end{equation*}
If \(a\in\mathbb{C},\,|a|\leq\frac{1}{2}\), then \(\textup{Re}\,(1-a)^2\geq(1-|a|)^2\).
Hence \(\textup{Re}\,(1-a)^2\geq (1-1/4)^2>1/2\) if \(|a|\leq{}1/4\).
Thus the assertion of  Lemma
holds with \\[-2.0ex]
\begin{equation}
\label{NK}
N(K)=4\,\displaystyle{\max_{\zeta\in{}K}|}\zeta|/\pi\,.
\end{equation}
\end{proof}
\begin{corollary}\label{CorCon}The series on the right hand side of \eqref{PuTrF} converges for every \(\zeta\in\mathbb{C}\). The convergence of this series is locally uniform with respect to
\(\zeta\).
\end{corollary}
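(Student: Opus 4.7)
The plan is to deduce the corollary directly from Lemma \ref{StSer} via the Weierstrass M-test. Fix a compact set $K\subset\mathbb{C}$ and let $N=N(K)$ be the integer produced by Lemma \ref{StSer}. The finite partial sum $\sum_{|n|\leq N}e^{-(\zeta-n\pi)^2/\varepsilon}$ is a finite sum of entire functions, hence automatically uniformly bounded on $K$ and causes no difficulty. The substance of the argument lies in the tail $|n|>N$.

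For $\zeta\in K$ and $|n|>N$, Lemma \ref{StSer} gives $\operatorname{Re}(\zeta-n\pi)^2\geq (n\pi)^2/2$. Since $|e^{w}|=e^{\operatorname{Re}w}$, this yields the pointwise majorization
\begin{equation*}
\bigl|e^{-(\zeta-n\pi)^2/\varepsilon}\bigr|=e^{-\operatorname{Re}(\zeta-n\pi)^2/\varepsilon}\leq e^{-(n\pi)^2/(2\varepsilon)},
\end{equation*}
uniformly in $\zeta\in K$. The numerical series $\sum_{|n|>N}e^{-(n\pi)^2/(2\varepsilon)}$ is a convergent Gaussian series (its terms decay super-geometrically in $n$), so by the Weierstrass M-test the tail $\sum_{|n|>N}e^{-(\zeta-n\pi)^2/\varepsilon}$ converges uniformly on $K$. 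Combining with the harmless finite head proves locally uniform convergence on $\mathbb{C}$.

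There is essentially no obstacle, since Lemma \ref{StSer} has already done the real work of controlling the real part of the exponent. The only thing to be mindful of is keeping the compact set $K$ fixed while extracting a single $N(K)$ that serves for all $\zeta\in K$ simultaneously, which is exactly what Lemma \ref{StSer} provides through the explicit bound $N(K)=4\max_{\zeta\in K}|\zeta|/\pi$.
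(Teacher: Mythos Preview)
Your proof is correct and is precisely the argument the paper has in mind: the corollary is stated without proof immediately after Lemma~\ref{StSer}, relying implicitly on exactly this Weierstrass M-test deduction from the bound $\operatorname{Re}(\zeta-n\pi)^2\geq(n\pi)^2/2$ for $|n|>N(K)$.
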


Given \(\zeta\in\mathbb{C}\), we split the series on the right hand side of \eqref{PuTrF} into two
series:
\begin{equation}
\label{Spl}
H_{\varepsilon}(\zeta)=H_{\varepsilon}^{1}(\zeta)+H_{\varepsilon}^{2}(\zeta),
\end{equation}
where
\begin{equation}
\label{Spl12}
H_{\varepsilon}^{j}(\zeta)=\sqrt{\tfrac{\pi}{\varepsilon}}\!\!\!\!
\sum\limits_{n\in{}Z_j(\zeta)}\!\!\text{\large\(e\)}^{-(\zeta-n\pi)^{2}/\varepsilon},\ \ j=1,\,2,
\ \varepsilon>0\,.
\end{equation}
and
\begin{equation}
\label{Zl12}
Z_1(\zeta)=\lbrace{}n\in\mathbb{Z}:\,\textup{Re}\,(\zeta-n\pi)^2\leq0\rbrace,\ \
Z_2(\zeta)=\lbrace{}n\in\mathbb{Z}:\,\textup{Re}\,(\zeta-n\pi)^2>0\rbrace.
\end{equation}
It is clear that
\(Z_1(\zeta)\cap{}Z_2(\zeta)=\emptyset,\  Z_1(\zeta)\cup{}Z_2(\zeta)=\mathbb{Z}.\)
From Lemma \ref{StSer} it follows that \(n\in{}Z_2(\zeta)\) if \(|n|>4|\zeta|/\pi\).
So \emph{the set \(Z_1(\zeta)\) is always finite, may be empty}.

\begin{remark}
\label{EZ}
If the set \(Z_1(\zeta)\) is empty, we set \(H_{\varepsilon}^{1}(\zeta)\equiv0\). So
the equality \eqref{Spl} holds always, whatever the set \(Z_1(\zeta)\) is, empty or not.
\end{remark}

\begin{lemma}
\label{H2}
Whatever \(\zeta\in\mathbb{C}\) is, the equality
\begin{equation}
\label{H2E}
\lim\limits_{\varepsilon\to+0}H_{\varepsilon}^{2}(\zeta)=0\,.
\end{equation}
holds.
\end{lemma}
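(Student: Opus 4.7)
The plan is to split the series defining \(H_\varepsilon^{2}(\zeta)\) at the cutoff \(|n|=N(\zeta)\) supplied by Lemma \ref{StSer} (applied to the singleton compact set \(K=\{\zeta\}\)) into a finite head and an infinite tail, and to verify that both contributions vanish as \(\varepsilon\to+0\). The crucial remark is that the modulus of a generic summand equals \(e^{-\textup{Re}\,(\zeta-n\pi)^{2}/\varepsilon}\); by the very definition of \(Z_{2}(\zeta)\) (together with Lemma \ref{StSer} on the tail), the real part in the exponent is strictly positive throughout \(Z_{2}(\zeta)\). Consequently every surviving exponential decays super-polynomially in \(1/\varepsilon\), and the prefactor \(\sqrt{\pi/\varepsilon}\) is harmless.

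Concretely, Lemma \ref{StSer} guarantees \(\{n\in\mathbb{Z}:|n|>N(\zeta)\}\subset Z_{2}(\zeta)\), so one has the disjoint decomposition
\begin{equation*}
H_\varepsilon^{2}(\zeta)=\sqrt{\tfrac{\pi}{\varepsilon}}\!\!\!\sum_{\substack{n\in Z_{2}(\zeta)\\|n|\leq N(\zeta)}}\!\!\!e^{-(\zeta-n\pi)^{2}/\varepsilon}\;+\;\sqrt{\tfrac{\pi}{\varepsilon}}\sum_{|n|>N(\zeta)}\!\!\!e^{-(\zeta-n\pi)^{2}/\varepsilon}.
\end{equation*}
For the finite head, the minimum \(\mu\) of \(\textup{Re}\,(\zeta-n\pi)^{2}\) over the finitely many indices appearing is strictly positive (by the definition of \(Z_{2}(\zeta)\)), so the head is bounded in modulus by a constant multiple of \(\sqrt{1/\varepsilon}\,e^{-\mu/\varepsilon}\), which tends to \(0\). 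For the tail, Lemma \ref{StSer} yields \(|e^{-(\zeta-n\pi)^{2}/\varepsilon}|\leq e^{-n^{2}\pi^{2}/(2\varepsilon)}\); factoring out the leading term, one obtains, for \(\varepsilon\leq 1\), the estimate
\begin{equation*}
\sum_{n>N(\zeta)}e^{-n^{2}\pi^{2}/(2\varepsilon)}\leq C\,e^{-(N(\zeta)+1)^{2}\pi^{2}/(2\varepsilon)}
\end{equation*}
with \(C\) independent of \(\varepsilon\), so the tail as a whole is bounded by a constant times \(\sqrt{1/\varepsilon}\,e^{-(N(\zeta)+1)^{2}\pi^{2}/(2\varepsilon)}\to 0\).

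No step is genuinely difficult; the argument is a direct dominated-decay estimate. The only point requiring some care is the tail: to extract a single exponential out of the infinite sum one compares, for \(\varepsilon\leq 1\), the factor \(e^{-(n^{2}-(N(\zeta)+1)^{2})\pi^{2}/(2\varepsilon)}\) with \(e^{-(n^{2}-(N(\zeta)+1)^{2})\pi^{2}/2}\), so that the remaining \(n\)-series is an \(\varepsilon\)-independent convergent series. Combining the head and tail bounds yields \eqref{H2E}.
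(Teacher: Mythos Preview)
Your argument is correct and follows essentially the same route as the paper's: split at the cutoff \(N(\zeta)\) from Lemma~\ref{StSer}, use the strict positivity of \(\textup{Re}\,(\zeta-n\pi)^{2}\) on the finite head, and use Lemma~\ref{StSer} on the tail. The only cosmetic difference is that the paper packages both parts into a single lower bound \(\textup{Re}\,(\zeta-n\pi)^{2}\geq c(\zeta)(n^{2}+1)\) valid for all \(n\in Z_{2}(\zeta)\) and then controls the resulting Gaussian series via an integral comparison, whereas you keep head and tail separate and dominate the tail by freezing \(\varepsilon=1\) in the residual factor; both devices achieve the same end.
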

\begin{proof}
Since \(\textup{Re}\,(\zeta-n\pi)^2>0\) for every \(n\in{}Z_2(\zeta)\), there exists a constant
\(c(\zeta)>0\) such that \(\textup{Re}\,(\zeta-n\pi)^2\geq{}c(\zeta)(n^2+1)\)
for \(n\in{}Z_2(\zeta),\,|n|\leq4|\zeta|/\pi\).
According to Lemma \ref{StSer}, the inequality
\begin{math}
\textup{Re}\,(\zeta-n\pi)^2\geq\frac{\pi^2}{8}(n^2+1)
\end{math}
holds for every \(|n|>4|\zeta|/\pi\). Therefore the inequality
\begin{equation}
\label{EZ2}
\textup{Re}\,(\zeta-n\pi)^2\geq{}c(\zeta)(n^2+1)
\end{equation}
holds for every \(n\in{}Z_2(\zeta)\) with some \(c(\zeta)>0\) which does not depend on \( n\).
Hence
\begin{equation}
\label{EstH2}
\big|H_{\varepsilon}^{2}(\zeta)\big|\leq
\sqrt{\tfrac{\pi}{\varepsilon}}\!\!\!\!
\sum\limits_{n\in{}Z_j(\zeta)}\!\!\text{\large\(e\)}^{-c(\zeta)(n^2+1)/\varepsilon}\leq
\sqrt{\tfrac{\pi}{\varepsilon}}\!\!\!\!
\sum\limits_{-\infty<n<\infty}\!\!\text{\large\(e\)}^{-c(\zeta)(n^2+1)/\varepsilon}\,.
\end{equation}
Let us estimate the value on the right hand side of \eqref{EstH2}. Clearly
\begin{equation*}
\sqrt{\tfrac{\pi}{\varepsilon}}\!\!\!\!
\sum\limits_{-\infty<n<\infty}\!\!\text{\large\(e\)}^{-c(\zeta)(n^2+1)/\varepsilon}=
\sqrt{\tfrac{\pi}{\varepsilon}}\text{\large\(e\)}^{-c(\zeta)/\varepsilon}+2
\sqrt{\tfrac{\pi}{\varepsilon}}\text{\large\(e\)}^{-c(\zeta)/\varepsilon}\!\!\!\!
\sum\limits_{1\leq{}n<\infty}\text{\large\(e\)}^{-c(\zeta)n^2/\varepsilon}
\end{equation*}
and
\begin{equation*}
\sum\limits_{1\leq{}n<\infty}\text{\large\(e\)}^{-c(\zeta)n^2/\varepsilon}\leq
\int\limits_{0}^{\infty}\text{\large\(e\)}^{-c(\zeta){}x^2/\varepsilon}dx=
\frac{1}{2}\sqrt{\frac{\pi\varepsilon}{c(\zeta)}}\,.
\end{equation*}
Thus
\begin{equation}
\label{FiEst}
\big|H_{\varepsilon}^{2}(\zeta)\big|\leq
\sqrt{\tfrac{\pi}{\varepsilon}}\text{\large\(e\)}^{-c(\zeta)/\varepsilon}+
\tfrac{\pi}{\sqrt{c(\zeta)}}\text{\large\(e\)}^{-c(\zeta)/\varepsilon}\,,
\end{equation}
and \eqref{H2E} holds.
\end{proof}
\begin{lemma}
\label{H1}
If \(Z_1(\zeta)\not=\emptyset\), then
\begin{equation}
\label{H1E}
\varlimsup_{\varepsilon\to+0}|H_{\varepsilon}^{1}(\zeta)|=\infty.
\end{equation}
\end{lemma}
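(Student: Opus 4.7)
My plan is to factor out the leading exponential growth from $H^{1}_{\varepsilon}(\zeta)$ and then show that the remaining oscillatory factor does not annihilate itself along every subsequence. Writing $\zeta = x+iy$, a direct computation gives $(\zeta - n\pi)^{2} = \bigl((x-n\pi)^{2}-y^{2}\bigr) + 2iy(x-n\pi)$, so $n\in Z_{1}(\zeta)$ iff $(x-n\pi)^{2}\le y^{2}$, and for such $n$ I set
\begin{equation*}
a_{n}:=y^{2}-(x-n\pi)^{2}\ge 0,\qquad b_{n}:=2y(x-n\pi),
\end{equation*}
so that $e^{-(\zeta-n\pi)^{2}/\varepsilon}=e^{a_{n}/\varepsilon}\,e^{-ib_{n}/\varepsilon}$. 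Since $Z_{1}(\zeta)$ is finite (Lemma~\ref{StSer}) and nonempty by hypothesis, $M:=\max\{a_{n}:n\in Z_{1}(\zeta)\}$ is a well-defined nonnegative real number, attained on a nonempty subset $Z_{1}^{\max}\subseteq Z_{1}(\zeta)$.

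With this notation the representation reads
\begin{equation*}
H^{1}_{1/s}(\zeta)\;=\;\sqrt{\pi s}\,e^{Ms}\bigl[\,T(s)+R(s)\,\bigr],\qquad s:=1/\varepsilon,
\end{equation*}
where $T(s):=\sum_{n\in Z_{1}^{\max}}e^{-ib_{n}s}$ is a generalized trigonometric polynomial, and $R(s):=\sum_{n\in Z_{1}(\zeta)\setminus Z_{1}^{\max}}e^{-(M-a_{n})s}\,e^{-ib_{n}s}$ satisfies $|R(s)|\to 0$ exponentially as $s\to+\infty$, since every surviving index contributes the strictly positive decay rate $M-a_{n}>0$. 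It therefore suffices to exhibit a sequence $s_{k}\to+\infty$ on which $|T(s_{k})|$ is bounded away from $0$: the prefactor $\sqrt{\pi s_{k}}\,e^{Ms_{k}}$ diverges unconditionally, even when $M=0$, through the $\sqrt{\pi s_{k}}$ factor alone.

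For the control of $T$ I would split on whether $y=\textup{Im}\,\zeta$ vanishes. If $y=0$, then $a_{n}=-(x-n\pi)^{2}\le 0$, so $n\in Z_{1}(\zeta)$ forces $x=n\pi$; hence $|Z_{1}(\zeta)|=1$, $T\equiv 1$, and there is nothing to do. If $y\neq 0$, the map $n\mapsto b_{n}=2y(x-n\pi)$ is injective on $\mathbb{Z}$, so the real frequencies $\{b_{n}:n\in Z_{1}^{\max}\}$ are pairwise distinct. Standard Bohr orthogonality then gives the mean-square identity
\begin{equation*}
\lim_{S\to\infty}\frac{1}{S}\int_{0}^{S}|T(s)|^{2}\,ds\;=\;|Z_{1}^{\max}|\;\ge\;1,
\end{equation*}
so $\varlimsup_{s\to+\infty}|T(s)|^{2}\ge 1$, and one can pick $s_{k}\to+\infty$ with $|T(s_{k})|\ge 1/2$.

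Combining the pieces, for $k$ large enough $|R(s_{k})|<1/4$, hence $|T(s_{k})+R(s_{k})|\ge 1/4$, and setting $\varepsilon_{k}:=1/s_{k}$ yields
\begin{equation*}
|H^{1}_{\varepsilon_{k}}(\zeta)|\;\ge\;\tfrac{1}{4}\sqrt{\pi s_{k}}\,e^{Ms_{k}}\;\longrightarrow\;+\infty,
\end{equation*}
proving \eqref{H1E}. The one genuinely nontrivial step is the lower bound on $|T|$: when $|Z_{1}^{\max}|\ge 2$ the maximizing terms could conceivably cancel along every sequence $s\to\infty$, and it is precisely the distinctness of the frequencies $b_{n}$ (together with Bohr orthogonality, or equivalently a Kronecker-type recurrence argument) that rules this out.
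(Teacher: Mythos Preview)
Your proof is correct and follows essentially the same route as the paper: factor out the dominant real exponential together with the $\sqrt{\pi/\varepsilon}$ prefactor, then show that the residual oscillatory sum has positive $\varlimsup$ by exploiting the distinctness of the surviving imaginary frequencies. The only cosmetic difference is that the paper packages this last step as a separate lemma on generalized exponential sums $\sum n_{k}e^{\lambda_{k}\tau}$ with $\textup{Re}\,\lambda_{k}\ge 0$ and appeals to almost periodicity, whereas you compute the Bohr mean square explicitly; the two justifications are equivalent.
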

\noindent
\textit{Proof}.
Given \(\zeta\in\mathbb{C}\), the numbers \(-(\zeta-n)^2\), where \(n\) runs over \(\mathbb{Z}\),
need not be pairwise different. However, for fixed \(\zeta\in\mathbb{C}\), each number can appear
among the numbers \(\{-(\zeta-n)^2\}_{n\in\mathbb{Z}}\) not more than twice:  the mapping
\(w\to(w-\zeta)^2\) covers the punctured plane \(\mathbb{C}\setminus\{\zeta\}\) twice.
 Let \(p\) be the total number of pairwise different numbers \(-(\zeta-n)^2,\,n\in{}Z_1(\zeta)\). Since \(Z_1(\zeta)\) always is finite, \(p<\infty\). Since  \(Z_1(\zeta)\not=\emptyset\), \(p>0\). Let
\(\lambda_k,\,1\leq{}k\leq{}p\), be pairwise different representatives  of the numbers \(-(\zeta-n)^2,\,n\in{}Z_1(\zeta)\), the number \(\lambda_k\) appears in the set \(-(\zeta-n)^2,\,n\in{}Z_1(\zeta)\), with
 multiplicity\footnote{So \(\sum\limits_{1\leq{}k\leq{}p}n_k=|Z_1(\zeta)|.\)} \(n_k\), where \(n_k\) is either \(1\), or \(2\).
 Denoting \(\tau=1/\varepsilon\), we present the value \(H_{\varepsilon}^{1 }(\zeta)\) as
\begin{equation}
H_{\varepsilon}^{1}(\zeta)=
\sqrt{\pi\tau}\sum\limits_{1\leq{}k\leq{}p}n_k\,{\text{\large\(e\)}}^{\lambda_k\tau},
\end{equation}
where the numbers \(\lambda_k,\,1\leq{}k\leq{}p\), are pairwise different,
\(\textup{Re}\,\lambda_k\geq0\), and \(n_k\) is either 1, or 2. Since the factor \(\sqrt{\tau}\) \ tends to \(\infty\) as \(\tau\to\infty\), it is enough to prove that
\begin{equation}
\label{ETP}
0<\varlimsup_{\tau\to\infty}\big|T(\tau)\big|\leq+\infty\,,
\end{equation}
where
\begin{equation}
\label{TP}
T(\tau)=\sum\limits_{1\leq{}k\leq{}p}n_k\,{\text{\large\(e\)}}^{\lambda_k\tau}\,.
\end{equation}

Lemma \ref{H1} is a consequence of the following fact.
\begin{lemma}
\label{EL}
Let \(T(\tau)\) be a trigonometric polynomial of the form \eqref{TP}, where
\begin{equation}
\textup{Re}\,\lambda_k\geq0,\,\,\lambda_k\,\,\textup{are pairwise different\ },\,\,\,n_k\not=0,\,\,1\leq{}k\leq{}p.
\end{equation}
Then the limiting relation \eqref{ETP} holds.
\end{lemma}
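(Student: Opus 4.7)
The plan is to split cases according to $\alpha := \max_{1\le k \le p}\textup{Re}\,\lambda_k \ge 0$. Setting $I := \{k : \textup{Re}\,\lambda_k = \alpha\}$ and $\mu_k := \textup{Im}\,\lambda_k$ for $k \in I$, I will write
\begin{equation*}
T(\tau) = e^{\alpha\tau}S(\tau) + R(\tau),\qquad S(\tau) := \sum_{k \in I} n_k\, e^{i\mu_k\tau},\qquad R(\tau) := \sum_{k \notin I} n_k\, e^{\lambda_k\tau}.
\end{equation*}
Since $\textup{Re}(\lambda_k - \alpha) < 0$ for every $k \notin I$, the tail satisfies $e^{-\alpha\tau}R(\tau) \to 0$ as $\tau\to\infty$. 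So the entire problem reduces to showing
\begin{equation*}
\varlimsup_{\tau\to\infty}|S(\tau)| > 0,
\end{equation*}
because then, when $\alpha = 0$ one gets $\varlimsup|T(\tau)| = \varlimsup|S(\tau)| > 0$, and when $\alpha > 0$ one gets $|T(\tau)| = e^{\alpha\tau}|S(\tau) + o(1)|$, which forces $\varlimsup |T(\tau)| = +\infty$.

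The heart of the argument is a Parseval-type mean-value identity for $S$. The frequencies $\{\mu_k\}_{k\in I}$ are pairwise distinct real numbers (distinct because the $\lambda_k$ themselves are pairwise distinct and share the common real part $\alpha$), and the coefficients $n_k$ are nonzero. Using the orthogonality relation
\begin{equation*}
\lim_{T\to\infty}\frac{1}{T}\int_0^T e^{i(\mu_j - \mu_k)\tau}\,d\tau = \begin{cases}1, & j = k,\\ 0, & j \ne k,\end{cases}
\end{equation*}
and expanding $|S(\tau)|^2 = S(\tau)\overline{S(\tau)}$ termwise, I obtain
\begin{equation*}
\lim_{T\to\infty}\frac{1}{T}\int_0^T |S(\tau)|^2\,d\tau = \sum_{k\in I}|n_k|^2 > 0.
\end{equation*}
If $\varlimsup_{\tau\to\infty}|S(\tau)|$ were zero, then $|S(\tau)|\to 0$, and since $S$ is bounded and continuous, the Cesàro mean of $|S|^2$ would also tend to zero, contradicting the positive limit above. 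Hence there exist $c > 0$ and a sequence $\tau_j \to \infty$ with $|S(\tau_j)| \ge c$, which is what I need.

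The main (and really only) technical step is this Parseval/orthogonality identity for exponential polynomials with distinct real frequencies; it is classical but is where the distinctness of the $\lambda_k$ and the non-vanishing of the $n_k$ are used essentially. Everything else --- the decomposition by dominant real part, the exponential decay of the remainder $R$, and the two-case assembly of $|T(\tau)|$ from $|S(\tau_j)| \ge c$ --- is routine bookkeeping. One subtle point to be mindful of is that almost-periodic polynomials with distinct frequencies cannot be canceled by any finite linear relation, which is precisely what the Parseval identity encodes.
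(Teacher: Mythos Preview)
Your proof is correct and follows essentially the same route as the paper: decompose by the maximal real part $\alpha=\max_k\textup{Re}\,\lambda_k$, show the remainder $e^{-\alpha\tau}R(\tau)\to0$, and reduce to $\varlimsup_{\tau\to\infty}|S(\tau)|>0$ for the finite exponential sum $S$ with distinct real frequencies. The only difference is in how the last step is justified: the paper appeals to the fact that $S$ is a nonzero almost periodic function, hence $\varlimsup|S|=\sup|S|>0$, while you prove it directly via the mean-value (Parseval) identity $\lim_{T\to\infty}T^{-1}\int_0^T|S|^2=\sum_{k\in I}|n_k|^2>0$. Both justifications are standard and equivalent here; yours is a touch more self-contained.
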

\begin{proof}
We order the numbers \(\lambda_k\) so that
\begin{equation*}
\textup{Re}\,\lambda_1=\textup{Re}\,\lambda_2=
\,\,\cdots\,\,=\textup{Re}\,\lambda_q>\textup{Re}\,\lambda_{q+1}\geq\,\,\cdots\,\,
\geq\textup{Re}\,\lambda_p\,\,\,(\,\geq0\,).
\end{equation*}
Let
 \[\lambda_k=\mu_k+i\nu_k,\ \ \
\mu_k\in\mathbb{R},\,\nu_k\in\mathbb{R},\ \ \,1\leq{}k\leq{}p.\]
Denote \(\mu=\mu_1=\mu_2=\,\,\cdots\,\,=\mu_q.\) Then
\begin{equation}
\label{TPS}
T(\tau)=e^{\mu{}\tau}\big(S(\tau)+R(\tau)\big)\,,
\end{equation}
where
\begin{equation*}
S(\tau)=\sum\limits_{1\leq{}k\leq{}q}n_ke^{i\nu_k\tau},\hspace*{2.0ex}
R(\tau)=\sum\limits_{q+1\leq{}k\leq{}p}n_ke^{(\mu_k-\mu)\tau}e^{i\nu_k\tau},
\end{equation*}
and
\begin{equation}
\mu\geq0,\hspace*{2.0ex} \mu>\mu_k
\,\,\textup{ for }\,\,q+1\leq{}k\leq{}p,\hspace*{2.0ex} \nu_k\in\mathbb{R}\,\,\textup{ for }\,\,1\leq{}k\leq{}p,
\end{equation}
Since \(\mu_k<\mu\) for \(q+1\leq{}k\leq{}p\),
\begin{equation}
\label{ZL}
\lim_{\tau\to+\infty}R(\tau)=0\,.
\end{equation}
Since \(n_k\not=0\) and the numbers \(\nu_k\in\mathbb{R}\) are pairwise different, the function
\(S(\tau)\) is an almost periodic (possibly periodic) function, \(S(\tau)\not\equiv0\).
Therefore
\begin{equation}
\label{APP}
\varlimsup_{\tau\to\pm\infty}|S(\tau)|=\sup_{\tau\in\mathbb{R}}|S(\tau)|>0.
\end{equation}
The limiting relation \eqref{ETP} is a consequence of \eqref{TPS}, \eqref{ZL}, \eqref{APP}
and \(\mu\geq0\).
\end{proof}

From definition of the divergence set \(\mathscr{V}_H\) (see Definition \ref{DivHDom}),
from Lemmas \ref{H1}, \ref{H2}, and from the equality \eqref{Spl} we obtain
the following description of the divergence set \(\mathscr{V}_H\).
\begin{theorem}
\label{DDS}
\begin{equation}
\label{DDSE}
\mathscr{V}_H=\lbrace\zeta\in\mathbb{C}:\,Z_1(\zeta)\not=\emptyset\rbrace\,,
\end{equation}
where \(Z_1(\zeta)\) is defined in \eqref{Zl12}.
\end{theorem}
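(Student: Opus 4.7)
The plan is to combine the splitting \eqref{Spl} with the two separate estimates already established in Lemmas \ref{H2} and \ref{H1}. There is little left to do beyond a triangle inequality argument on each side; the key point is that $H_\varepsilon^2$ always vanishes in the limit, so the presence or absence of divergence is entirely controlled by $H_\varepsilon^1$, and in turn by whether $Z_1(\zeta)$ is empty.

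Fix $\zeta \in \mathbb{C}$ and consider the two cases. First, suppose $Z_1(\zeta) = \emptyset$. Then by Remark \ref{EZ}, $H_\varepsilon^1(\zeta) \equiv 0$, so $H_\varepsilon(\zeta) = H_\varepsilon^2(\zeta)$. By Lemma \ref{H2}, $\lim_{\varepsilon \to +0} H_\varepsilon(\zeta) = 0$, and in particular $\varlimsup_{\varepsilon \to +0} |H_\varepsilon(\zeta)| = 0 < \infty$. Therefore $\zeta \notin \mathscr{V}_H$.

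Second, suppose $Z_1(\zeta) \neq \emptyset$. By \eqref{Spl}, for every $\varepsilon > 0$,
\begin{equation*}
|H_\varepsilon(\zeta)| \geq |H_\varepsilon^1(\zeta)| - |H_\varepsilon^2(\zeta)|.
\end{equation*}
By Lemma \ref{H1}, $\varlimsup_{\varepsilon \to +0} |H_\varepsilon^1(\zeta)| = \infty$, hence there is a sequence $\varepsilon_j \to +0$ along which $|H_{\varepsilon_j}^1(\zeta)| \to \infty$. Along that same sequence, Lemma \ref{H2} gives $|H_{\varepsilon_j}^2(\zeta)| \to 0$. Consequently $|H_{\varepsilon_j}(\zeta)| \to \infty$, which means $\varlimsup_{\varepsilon \to +0} |H_\varepsilon(\zeta)| = \infty$ and $\zeta \in \mathscr{V}_H$.

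Combining the two cases gives exactly \eqref{DDSE}. The only place where any real work is needed is the implication $Z_1(\zeta) \neq \emptyset \Rightarrow \varlimsup |H_\varepsilon(\zeta)| = \infty$, and that work was already carried out in Lemma \ref{H1} (through Lemma \ref{EL} on exponential polynomials). Everything else is bookkeeping around the decomposition and the triangle inequality.
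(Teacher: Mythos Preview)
Your argument is correct and follows exactly the route the paper itself takes: the paper simply states that the theorem follows from the splitting \eqref{Spl} together with Lemmas \ref{H2} and \ref{H1}, and you have written out that deduction explicitly via the triangle inequality. There is nothing to add.
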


Now we obtain a geometric description of the divergence set \(\mathscr{V}_H\). From this description
Theorem \ref{DivTh} follows.

For \(n\in\mathbb{Z}\), let
\begin{equation*}
Q_n=\lbrace\zeta\in\mathbb{C}:\,\textup{Re}\,(\zeta-n\pi)^{2}>0\rbrace\,.
\end{equation*}
According to \eqref{Zl12} and Theorem \ref{DDS},
\begin{equation}
\label{GD1}
\mathbb{C}\setminus\mathscr{V}_H=\bigcap_{n\in\mathbb{Z}}Q_n.
\end{equation}
It is clear that
\begin{equation*}
Q_n=Q_0+n\pi, \,\,\forall\,n\in\mathbb{Z}\,,
\end{equation*}
where
\begin{equation*}
Q_0=\lbrace\zeta\in\mathbb{C}:\,-\pi/4<\textup{arg}\zeta<\pi/4\rbrace
\bigcup\lbrace\zeta\in\mathbb{C}:\,
3\pi/4<\textup{arg}\zeta<5\pi/4\rbrace\,.
\end{equation*}
Simple geometric considerations show that
\begin{equation}
\label{UI}
\bigcap_{n\in\mathbb{Z}}Q_n=\bigcup_{n\in\mathbb{Z}}T_n,
\end{equation}
where
\begin{equation}
\label{CoS}
T_n=T_0+n\pi,\ \ n\in\mathbb{Z},
\end{equation}
and \(T_0\) is the open square with the vertices at the points
\(\zeta=0\), \(\zeta=\pi\),\,\(\zeta=(1+i)\pi/2\),\,\(\zeta=(1-i)\pi/2\). According to
\eqref{GD1} and \eqref{UI}, the divergence set \(\mathscr{V}_H\) is
\begin{gather}
\mathscr{V}_H=\Gamma_{+}\bigcup\Gamma_{-},\\
\intertext{where}
\Gamma_{+}=\lbrace\xi+i\eta:\,-\infty<\xi<\infty,\ \ \eta\geq\phantom{-}\min_{n\in\mathbb{Z}}|\xi-n\pi|\rbrace\,,\\
\Gamma_{-}=\lbrace\xi+i\eta:\,-\infty<\xi<\infty,\ \ \eta\leq-\min_{n\in\mathbb{Z}}|\xi-n\pi|\rbrace\,.
\end{gather}
According to \eqref{ESRe},
\begin{equation}
\label{DeD}
\mathscr{V}_h=\exp\{2i\Gamma_{+}\}\bigcup\exp\{2i\Gamma_{-}\}\,.
\end{equation}
It is clear that
\begin{equation}
\label{DiD}
\exp\{2i\Gamma_{+}\}\subset\big(\{1\}\cup\mathbb{D}^{+}\big),\ \ \
\exp\{2i\Gamma_{-}\}\subset\big(\{1\}\cup\mathbb{D}^{-}\big).
\end{equation}
From \eqref{SeDe}, \eqref{DeD} and \eqref{DiD} we conclude that the divergence set \(\mathscr{V}_f\) is
\begin{equation}
\label{FSTE}
\mathscr{V}_f=\exp\{2i\Gamma_{-}\}\,.
\end{equation}
It is clear that
\[\exp\{2i\Gamma_{-}\}=\mathbb{C}\setminus\mathscr{G},\]
where the set \(\mathscr{G}\) was defined in Definition \ref{DeDo}. Thus
\begin{equation}
\label{LEq}
\mathscr{V}_f=\mathbb{C}\setminus\mathscr{G}\,.
\end{equation}
Theorem \ref{DivTh} is proved. \hfill \(\Box\)


\end{document}